\def\url@leostyle{%
	\@ifundefined{selectfont}{\def\UrlFont{\sf}}{\def\UrlFont{\scriptsize\ttfamily}}} \makeatother\urlstyle{leo}
\newtheorem{theorem}{Theorem}
\newtheorem{proposition}[theorem]{Proposition}
\newtheorem{lemma}[theorem]{Lemma}
\newtheorem{assumption}{Assumption}
\theoremstyle{definition}
\newtheorem{definition}[theorem]{Definition}
\newtheorem{corollary}[theorem]{Corollary}
\newtheorem{example}[theorem]{Example}
\theoremstyle{remark}
\newtheorem{remark}[theorem]{Remark}
\numberwithin{equation}{section}
\numberwithin{theorem}{section}
\definecolor{Red}{rgb}{0.9,0,0.0}
\definecolor{Blue}{rgb}{0,0.0,1.0}
\def\cB{\mathcal{B}}
\def\cP{\mathcal{P}}
\def\cR{\mathcal{R}}
\def\cT{\mathcal{T}}
\def\cV{\mathcal{V}}
\def\bE{\mathbb{E}}
\def\bN{\mathbb{N}}
\def\bP{\mathbb{P}}
\def\bQ{\mathbb{Q}}
\def\bR{\mathbb{R}}
\def\bS{\mathbb{S}}
\def\bX{\mathbb{X}}
\def\sF{\mathscr{F}}
\def\sL{\mathscr{L}}
\def\sS{\mathscr{S}}
\def\mT{\mathsf{T}}
\newcommand{\set}[1]{\{#1\}}            
\renewcommand{\mid}{\;|\;}              
\newcommand{\norm}[1]{ \| #1 \| }       
\DeclareMathOperator*{\vsup}{v-sup} 
\DeclareMathOperator*{\Vsup}{V-sup} 
\DeclareMathOperator*{\vinf}{v-inf} 
\newcommand{\inprod}[2]{\langle #1, #2 \rangle}
\DeclareMathOperator*{\cl}{cl} 
\title{Vector-valued robust stochastic control}
\def\and{%
\end{tabular}%
\begin{tabular}[t]{c}}%
\def\@fnsymbol#1{\ensuremath{\ifcase#1\or a\or b\or c\or
		d\or e\or f\or g\or h\or i\else\@ctrerr\fi}}
\author{
	Igor Cialenco\,\thanks{Department of Applied Mathematics, Illinois Institute of Technology
		\newline \hspace*{1.45em}  10 W 32nd Str, Building RE, Room 220, Chicago, IL 60616, USA
		\newline \hspace*{1.45em}  Emails: \url{cialenco@iit.edu}, URL: \url{http://cialenco.com}
		\vspace{0.5em}} \quad and  
	\and
	Gabriela Kov\'a\v{c}ov\'a\,\thanks{
Department of Mathematics, University of California, Los Angeles, 
\newline \hspace*{1.45em} Portola Plaza 520, Los Angeles, CA 90095, USA \newline \hspace*{1.45em} Email: \url{kovacova@ucla.edu}, URL: \url{https://sites.google.com/view/kovacova/homepage} 
	}
}
\date{ {\small 
		First Circulated and this version: June 28, 2024\\
}}
\begin{document}

	\maketitle

	\vspace{-2em}
	
	
	{\footnotesize
		\begin{tabular}{l@{} p{350pt}}
			\hline \\[-.2em]
			\textsc{Abstract}: \ &

We study a dynamic stochastic control problem subject to Knightian uncertainty with multi-objective (vector-valued) criteria. Assuming the preferences across expected multi-loss vectors are represented by a given, yet general, preorder, we address the model uncertainty by adopting a robust or minimax perspective, minimizing expected loss across the worst-case model. For loss functions taking real (or scalar) values, there is no ambiguity in interpreting supremum and infimum. In contrast to the scalar case, major challenges for multi-loss control problems include properly defining and interpreting the notions of supremum and infimum, as well as addressing the non-uniqueness of these suprema and infima. To deal with these, we employ the notion of an ideal point vector-valued supremum for the robust part of the problem, while we view the control part as a multi-objective (or vector) optimization problem. Using a set-valued framework, we derive both a weak and strong version of the dynamic programming principle (DPP) or Bellman equations by taking the value function as the collection of all worst expected losses across all feasible actions. The weak version of Bellman's principle is proved under minimal assumptions. To establish a stronger version of DPP, we introduce the rectangularity property with respect to a general preorder. We also further study a particular, but important, case of component-wise partial order of vectors, for which we additionally derive DPP under a different set-valued notion for the value function, the so-called upper image of the multi-objective problem. Finally, we provide illustrative examples motivated by financial problems.

These results will serve as a foundation for addressing time-inconsistent problems subject to model uncertainty through the lens of a set-valued framework, as well as for studying multi-portfolio allocation problems under model uncertainty.

\\[0.5em]
\textsc{Keywords:} \ &  set-valued control; model uncertainty;  stochastic robust control; multi-objective criteria; Bellman's principle; dynamic programming; rectangularity property; Knightian uncertainty.   \\
\textsc{MSC2020:} \ &  Primary 90C39; Secondary  90C29, 93E03  \\[1em]
\hline
\end{tabular}

	}
	
\section{Introduction}
Model uncertainty refers to the challenge of accurately modeling the dynamics of the underlying stochastic system. This uncertainty may arise from various sources, such as incomplete information, ambiguity or presence of unobservable factors. It was first discussed by Knight~\cite{Knight1921} and is also referred as Knightian uncertainty.

A particularly important field extensively involving Knightian uncertainty is the control of stochastic systems subject to model uncertainty. Here, the controller does not know the true law of the underlying stochastic process a priori but knows it belongs to a known family of probability laws. As such, the controller faces not only the randomness of the controlled system, but also the Knightian uncertainty. A significant body of literature has addressed the challenge of model uncertainty, particularly driven by issues in finance and economics where flawed models can lead to erroneous investment decisions, ineffective risk management strategies, and inaccurate pricing of financial instruments. 

One approach to tackle this challenge is through robust optimization, which seeks controls that perform well across various possible models. Other approaches include adaptive control, Bayesian control, adaptive-robust control, and strong robust control. For a brief overview of these approaches, we refer the reader to  \cite{BCCCJ2019}. Additionally, recent advancements in approximation methods using machine learning have led to the development of new efficient methods for solving these control problems \cite{KrachTeichmann2024}. Substantial advancements have been achieved, encompassing highly abstract scenarios such as general multidimensional state spaces, discrete and continuous time frameworks, finite and infinite horizon settings, and finite or infinite model uncertainty spaces. These developments have also established connections to partial differential equations (PDEs) and backward stochastic differential equations (BSDEs). 
However, it is important to note that all these achievements assume that  the loss or reward function takes real one dimensional values; we refer to this as  scalar case.

On the other hand, there is a growing body of literature on multi-valued or set-valued stochastic control problems, with prominent applications in mathematical finance and economics. In \cite{RudloffUlus2020}, the authors study certainty equivalent and utility indifference pricing for incomplete preferences using vector optimization, stemming from \cite{Nau2006}, which is dedicated to the representation of incomplete preferences. Multi-portfolios and markets with transaction costs in a dynamic setup are studied in \cite{FeinsteinRudloff2013a,LoehneRudloff2014,FeinsteinRudloff2015,FeinsteinRudloff2019,AraratFeinstein2020} using multi-valued or set-valued risk measures. A novel approach to dealing with (scalar) time-inconsistent stochastic control problems by viewing them as set-valued problems was introduced in \cite{KovacovaRudloff2019}, which in particular was applied to portfolio optimization problems; see also \cite{CKR2021} for applications to the maximization of dynamic acceptability indices.

The literature on multi-objective optimization problems under model uncertainty is predominantly focused on the static case. Specifically, a robust approach to static multi-objective problems under model uncertainty has been explored through two main streams of research. The first stream, exemplified by works such as \cite{KuroiwaLee2012,FliegeWerner2014}, addresses robust multi-objective problems by employing a robustified objective. The second stream, represented by studies like \cite{EhrgottEtAl2014,IdeEtAl2014}, utilizes a set-optimization approach.
For an overview of existing results in static robust multi-objective (or vector) optimization, we refer the reader to the surveys \cite{IdeSchoebel2015,WiecekDranichak2016}.

To the best of our knowledge, this study is the first attempt to investigate (dynamic) stochastic control problems subject to model or Knightian uncertainty, involving multi-objective (vector-valued) criteria. These results will serve as a foundation for addressing time-inconsistent problems subject to model uncertainty through the lens of a set-valued framework, as well as for studying multi-portfolio allocation problems under model uncertainty.

Our approach assumes that the decision maker's preference across expected (multi-)loss vectors is represented by a given, yet general, preorder. We address model uncertainty by adopting a robust or minimax perspective, minimizing expected loss across the worst-case model.
For loss functions taking real (or scalar) values, there is no ambiguity in interpreting supremum and infimum. In contrast, one major challenge for multi-loss control problems is to properly define and interpret the notion of supremum and infimum. Another key difficulty is that usually these suprema and infima are not unique. 

To deal with these obstacles,  first, we employ the notion of \textit{ideal point vector-valued supremum} of a collection of vectors in $\bR^d$ with respect to a preorder, see \cite[Example 1.8]{Loehne2011}. We also introduce its dynamic, or conditional, version. Using this notion of the supremum for the robust part, we treat the control part of the problem as a multi-objective (or vector) optimization problem with respect to the preorder. 

The \textbf{second} novel contribution is derivation of a \textit{version of dynamic programming principle} (DPP) or Bellman equations.  
For multi-objective or set-optimization control problems there is no universal definition of a value function. 
Recent works on this topic have illustrated the advantage of utilizing set-valued mappings  as value functions, see e.g.~\cite{KovacovaRudloff2019,FeinsteinEtAl2022,HamelVisetti2020,FeinsteinRudloff2017,IseriZhang2021,IseriZhang2023}. In broad terms, the DPP corresponds to order relations between sets.  Similar to \cite{KovacovaRudloff2019}, we take the value function to be the collection of all worst expected losses across all feasible actions and, without any additional assumptions, derive  the corresponding backward recursive inclusions for the robust problem; see Theorem~\ref{th:Bellman1} and Theorem~\ref{th:Bellman0-1}.

For the scalar, one dimensional, stochastic robust control problems, it is well understood that the DPP hinges on the so called rectangularity property of the set of probability measures describing the model uncertainty, and we refer to  \cite{Shapiro2016} and references therein. 
The  \textbf{third} key contribution of our study is the introduction of \textit{rectangularity property} with respect to a  general preorder. Leveraging this we prove a stronger version of Bellman's principle of optimality; see Theorem~\ref{th:Bellman3} and Theorem~\ref{th:Bellman4}.

Using a series of results, we show that while the vector-valued supremum may not be unique, the Bellman's principle is invariant with respect to chosen supremum. \textbf{Fourth}, we further study a particular, but important, case of component-wise partial order of vectors for which we additionally derive DPP under a second popular set-valued notion of a value function, the so-called upper image of the multi-objective problem.
Finally, we provide some illustrative examples.

The paper is organized as follows. In Section~2 we set the stage by introducing the underlying stochastic model and briefly reviewing the fundamental concepts associated with vector preorders (Section~\ref{sec:preorders}) and vector optimization problems (Section~\ref{sec:VOP}). Section~\ref{sec:sup} is dedicated to the notion of supremum of collection of vectors with respect to a preorder. We discuss several important properties of this supremum, such as existence, uniqueness and monotonicity. Section~\ref{sec:dyn-prog-preorder} is dedicated to Bellman's principle of optimality for the main stochastic control problem. Starting with precise formulation of the problem, we define the set-valued candidate for the value function, and derive a weak version of DPP; Theorem~\ref{th:Bellman1} and Theorem~\ref{th:Bellman0-1}. Also here we introduce the notion of rectangularity with respect to a preorder, and prove strong versions of DPP, Theorem~\ref{th:Bellman3} and Theorem~\ref{th:Bellman4}. In Section~\ref{sec:component-wise} we discuss an important and natural preorder--component-wise partial order, where we also provide a DPP in terms of upper-sets of the value function. Last section is devoted to examples.

We are confident that the concepts outlined in this manuscript will form the basis for exploring general stochastic control problems under model uncertainty for multi-valued or set-valued criteria. Among some important avenues for imminent exploration, we highlight: 
other approaches to model uncertainty such as adaptive control, adaptive-robust control, Bayesian control; more general objectives such as risk-reward or dynamic risk; development of numerical solutions and computationally feasible algorithms to solve these problems. 
Furthermore, in light of recent advancements in reinforcement learning methodology, it would be advantageous to investigate the aforementioned problems within the context of Markov Decision Processes (MDP). Unlike the scalar case, however, there exists no straightforward mapping between MDP and our framework.

\section{Preliminaries}\label{sec:MDP-setup}

Let $(\Omega, \sF)$ be a measurable space,  and $T\in \bN$ be a fixed time horizon. Let $\cT=\{0,1,2,\ldots,T\}$, $\cT'=\{0,1,2,\ldots,T-1\}$, and $\Theta\subset\bR^d$ be a non-empty set, which will play the role of the parameter space throughout. For simplicity, in this work we assume that $\Theta$ is finite.

On the space $(\Omega, \sF)$ we consider a random process $S=\{S_t,\ t\in \cT\}$ taking values in some measurable space $(\bS,\sS)$ with $\bS\subset \bR^m$. We postulate that this process is observed, and we denote by ${\mathbb {F}}^S=(\sF^S_t,t\in \cT)$ its natural filtration. The randomness of process $S$ is derived from a stochastic factor process $Z$. The (true) law of $Z$  is unknown, and assumed to belong to a parameterized family of probability distributions  $\mathbf{Q}(\Theta):=\{\bQ_\theta:  \theta\in \Theta\}$ on $(\Omega, \sF)$. We assume that the elements of $\mathbf{Q}(\Theta)$ are absolutely continuous with respect to a reference probability measure $\bP$. 
We denote by $\bE^\theta$, respectively $\bE_t^\theta$, the expectation, respectively the conditional expectation given $\sF_t$, with respect to the probability $\bQ_\theta.$ The true, but unknown,  law of $Z$ will be denoted by $\bQ_{\theta^*}$, so that $\theta ^*\in \Theta$ is the unknown true parameter. In what follows all equalities, inequalities and inclusions will be understood in $\bP$-a.s. sense.

In the sequel, we postulate that the process $S$ follows the controlled dynamics:
\begin{equation}\label{eq:dyn}
	S_{t+1}=F(t,S_t,\varphi_t,Z_{t+1}),\ t\in \cT',\ S_0=s,
\end{equation}
where $\varphi_t$ is a time $t$ control taking value in a compact set $A$, $Z=(Z_t,\, t\in\cT)$ is an $\bR^k$-valued random sequence whose law under each measure $\bQ_\theta$ is known, and $F: \cT \times \bS \times A \times \bR^k \to \bS$ is a measurable function. We will denote by $S^\varphi$ the controlled process $S$ corresponding to the control $\varphi$. With slight abuse of notations we will also use the notation $S_{t+1}^{\varphi_t}:=F(t,S_t,\varphi_t,Z_{t+1})$. The set of admissible controls $\varphi=(\varphi_t,\ldots, \varphi_{T-1})$ starting at time $t$, and given value $S_t$, is denote by  $\mathfrak{A}^t(S_t)$, while $\mathfrak{A}_t(S_t)\subset A$ is the set of one step time $t$ admissible controls $\varphi_t$, given $S_t$.

We now consider a multi-loss/multi-cost function $\ell: \bS \rightarrow \bR^d$. Hence, for each admissible strategy $\varphi\in\mathfrak{A}$ and unknown parameter $\theta\in \Theta$, the expected loss $\bE^{\theta} \left[ \ell(S^\varphi_T) \right]$ is multivariate, and thus to optimize this expected loss, one has to clarify how we compare (random) vectors, see the next subsection.

Our aim is to apply the robust approach to model uncertainty for a problem with multiple objectives. That is, we want to understand and solve a problem of the form
\begin{align}\label{eq:rob_multi}
\textrm{`` }
\inf_{\varphi\in\mathfrak{A}} \sup_{\theta\in \Theta} \bE^{\theta} \left[ \ell(S^\varphi_T) \right] \textrm{ ''}.
\end{align}
To interpret this problem meaningfully, we first need to clarify how we understand suprema over vectors that we address next. 

\subsection{Vector preorder and partial order}\label{sec:preorders}
A \textit{vector preorder} $\preceq$ on $\bR^d$ is reflexive ($x \preceq x$), transitive ($x \preceq y$ and $y \preceq z$ imply $x \preceq z$) and compatible with the vector space structure of $\bR^d$ ($x \preceq y$ implies $x + z \preceq y + z$ and $\alpha x \preceq \alpha y$ for $\alpha \geq 0$ and $z\in \bR^d$). A \textit{vector partial order} is a vector preorder that is additionally antisymmetric ($x \preceq y$ and $y \preceq x$ imply $x = y$).

An order relation $\preceq$ generates an \textit{ordering cone} $C_{\preceq} = \{ x \in \bR^d : 0 \preceq x \}$. Similarly, a set $C \subseteq \bR^d$ can be used to define an order relation $\preceq_C$ as $x \preceq_C y$ whenever $y-x \in C$. Recall that a cone $C$ is called \textit{pointed} if $C \cap (-C) = \{0\}$ and it is called \textit{solid} if it has a non-empty interior.  Next we summarizes some well-known facts about order relations and their ordering cones: 
\begin{itemize}
\item It holds $C_{\preceq_C} = C$. Moreover, if $\preceq$ is compatible with the vector structure, then $\preceq_{C_\preceq}$ coincides with $\preceq$.
\item If $\preceq$ is a vector preorder, then $C_\preceq$ is a convex cone with $0 \in C_\preceq$. If $\preceq$ is a vector partial order, then $C_\preceq$ is a pointed convex cone. 
\item If $C$ is a convex cone with $0 \in C$, then $\preceq_C$ is a vector preorder. If $C$ is a pointed convex cone, then $\preceq_C$ is a vector partial order. 
\end{itemize}

In the sequel, we fix a vector preorder $\preceq$ and simply denote by $C$ the corresponding ordering cone. Compatibility with the vector space is assumed throughout, and often we simply say preorder and partial order instead of vector preorder and vector partial order. 
Preorder (or partial order) $\preceq$ on $\bR^d$ naturally extends to a preorder (or partial order) on the space $\sL_t(\bR^d):=L^1(\Omega, \sF_t, \bP; \bR^d)$ of $\sF_t$-measurable  and integrable (classes of equivalence of) random variables with values in $\bR^d$. In the following, $\sL_t(C) \subseteq \sL_t(\bR^d)$ denotes the set of $\sF_t$-measurable \ random vectors with values in $C$. We use $\sL_t(C)$ to define order relation $\preceq^t$, for $X,Y\in \sL_t(\bR^d)$ we say that $X \preceq^t Y$ if and only if $Y-X \in \sL_t(C)$,  which implies $X(\omega) \preceq Y(\omega)$, $\bP$-a.s. It is easily verified that properties of $\preceq$ are inherited by the order relation $\preceq^t$ (in the $\bP$-a.s. and conditional sense) and $\preceq^t$ is a preorder (or partial order) on $\sL_t(\bR^d)$.

In order to compare sets of vectors, a given preorder on the space of vectors can be used to define various order relations on the space of sets, see e.g.~\cite{HamelEtAl2015}. Within this work we focus on two canonical set order relations generated by the vector preorder $\preceq$ and its ordering cone $C$. 
For $A, B \subseteq \bR^d$ we define
\begin{align*}
A \preccurlyeq B \quad &\Leftrightarrow \quad B \subseteq A + C, \\
&\text{and} \\
A \curlyeqprec B \quad &\Leftrightarrow \quad A \subseteq B - C.
\end{align*}
Similarly, we define  the order relations $\preccurlyeq^t$ and $\curlyeqprec^t$ for subsets of $\sL_t(\bR^d)$.

\begin{lemma}\label{lemma:order_exp}
The preorder is compatible with the conditional expectation operator, that is, if $X, Y \in \sL_{t+1} (\bR^d)$ and  $X \preceq^{t+1} Y$, then $\bE^\theta_t[X] \preceq^t \bE^\theta_t[Y]$, for any $\theta \in \Theta$. 
\end{lemma}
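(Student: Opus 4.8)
The plan is to reduce the vector statement to the scalar monotonicity of conditional expectation together with a dual description of the ordering cone. By linearity of the conditional expectation, $\bE^\theta_t[Y] - \bE^\theta_t[X] = \bE^\theta_t[Y-X]$, so the claim is equivalent to the assertion that $\bE^\theta_t$ maps $\sL_{t+1}(C)$ into $\sL_t(C)$; that is, setting $W := Y - X \in \sL_{t+1}(C)$, it suffices to show $\bE^\theta_t[W] \in \sL_t(C)$. Since $W$ is integrable, $\bE^\theta_t[W]$ is well defined and $\sF_t$-measurable, so the only thing to verify is that it takes values in $C$ a.s.

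First I would pass to the dual (positive polar) cone $C^* = \{ y \in \bR^d : \inprod{x}{y} \ge 0 \text{ for all } x \in C\}$. As $C$ is a closed convex cone, the bipolar theorem gives $C = \{ x \in \bR^d : \inprod{x}{y} \ge 0 \text{ for all } y \in C^*\}$. For each fixed $y \in C^*$, the scalar random variable $\inprod{W}{y}$ is nonnegative a.s. (because $W \in C$ a.s.), and pulling the deterministic vector $y$ out of the conditional expectation gives $\inprod{\bE^\theta_t[W]}{y} = \bE^\theta_t[\inprod{W}{y}]$; monotonicity of the scalar conditional expectation then yields $\inprod{\bE^\theta_t[W]}{y} \ge 0$ a.s.

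The main technical point is the order of the quantifiers: the exceptional null set a priori depends on $y$, and $C^*$ is uncountable. To handle this I would fix a countable dense subset $\{y_n\}_{n} \subseteq C^*$, discard the countable union of the corresponding null sets, and use continuity of the map $y \mapsto \inprod{\bE^\theta_t[W](\omega)}{y}$ to extend the inequality $\inprod{\bE^\theta_t[W](\omega)}{y} \ge 0$ from $\{y_n\}$ to all $y \in C^*$, for $\omega$ outside a single null set. By the bipolar representation this gives $\bE^\theta_t[W](\omega) \in C$ a.s., i.e. $\bE^\theta_t[W] \in \sL_t(C)$, which is precisely $\bE^\theta_t[X] \preceq^t \bE^\theta_t[Y]$.

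Two remarks on the hypotheses. The argument uses that $C$ is a closed convex cone; convexity and $0 \in C$ are guaranteed by $\preceq$ being a vector preorder, and closedness is the standing assumption on ordering cones (alternatively, closedness can be circumvented by an induction on the dimension of the affine hull of the support of $W$, applying a supporting hyperplane at each relative-boundary point). Finally, since $\bQ_\theta \ll \bP$, every $\bP$-a.s. inclusion above holds $\bQ_\theta$-a.s., so the scalar monotonicity of $\bE^\theta_t$ applies and the conclusion is read in the a.s. sense consistent with the paper's convention.
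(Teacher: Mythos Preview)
Your argument is correct. Both you and the paper begin with the same reduction: set $Z := Y - X \in \sL_{t+1}(C)$ and show $\bE^\theta_t[Z] \in \sL_t(C)$. From there the routes diverge. The paper invokes directly the fact that conditional expectation preserves membership in a convex set, arguing (somewhat informally) through the coordinate projections of $C$. You instead give a self-contained proof of that preservation via duality: write $C = (C^*)^*$ by the bipolar theorem, reduce to scalar monotonicity $\bE^\theta_t[\inprod{W}{y}] \ge 0$ for each $y \in C^*$, and then resolve the uncountable-null-set issue by passing to a countable dense subset of $C^*$ and using continuity. Your approach buys explicitness---in particular it makes visible where closedness of $C$ enters and how to avoid the quantifier trap over $y$---at the cost of a few extra lines; the paper's approach is terser but leans on an external fact whose coordinate-wise phrasing does not literally suffice for a general convex cone $C$.
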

\begin{proof}
If  $X \preceq^{t+1} Y$, then $Z := Y-X \in \sL_{t+1}(C)$. Since $C$ is convex, then the set generated by each coordinate of $C$ is convex in $\bR$. Recall that the conditional expectation preserves values in a convex set, and thus each coordinate of $\bE_t^\theta[Z]$ takes values in the same set where the corresponding coordinate of $Z$ takes values. Consequently, we have that $\bE_t^\theta[Y-X] = \bE_t^\theta [Z] \in \sL_t(C)$.    
\end{proof}

One of the simplest and most widely used partial orders on $\bR^d$ is generated by the ordering cone $C=\bR^d_+ = \set{x\in\bR^d : x_i\geq 0 , i=1,\ldots, d}$. We denote this partial order simply by $\leq$ since it corresponds to coordinate-wise comparison of vectors. Section~\ref{sec:component-wise} is dedicated to this partial order.

\subsection{Supremum of vectors}\label{sec:sup}
In simplified terms, the robust control approach is based on the following idea: for each choice of a control determine the worst-case model (the supremum in \eqref{eq:rob_multi}), then optimize across all feasible controls (the infimum in \eqref{eq:rob_multi}). For loss functions $\ell$ taking real values, there is no ambiguity in interpreting supremum and infimum. In contrast, for multi-loss problem, with $\ell$ taking values in $\bR^d$, one first needs to clarify how to interpret a supremum of a collection of (random) vectors. In this regards, vector (and set) optimization, including (deterministic) robust multi-objective optimization, literature, provides two canonical ways to think about the supremum:
\begin{itemize}
\item \textit{as a vector} with the property of being the lowest upper bound of the collection of vectors.  We will refer to this vector-valued notion of the supremum as \textit{ideal point} supremum, properly defined below. As shown later, even for finite collection of vectors, often no member of the collection is an upper bound for all other members. In terms of model uncertainty, a vector-valued supremum (assuming it exists) will often not correspond to any market model, but rather be an idealized scenario, hence the name ideal point. This approach corresponds to the robustified objective proposed  in~\cite{KuroiwaLee2012, FliegeWerner2014} for static robust multi-objective problems.

\item \textit{as sets} using set optimization framework. An upper bound would be a so-called lower set; if one considers finite collections, then a set of maximizers can also serve the purpose. Recall that order relations extended to set relations allow us to compare sets; minimizing a set-valued supremum would yield a set optimization problem. This is the basis of the second approach to robust multi-objective optimization explored in~\cite{EhrgottEtAl2014,IdeEtAl2014}.
\end{itemize} 
These two approaches to define supremum are illustrated in Figure~\ref{fig:0}. 

\begin{figure}[h]
\centering
\subfloat{
\centering
\includegraphics[width = 0.45\textwidth]{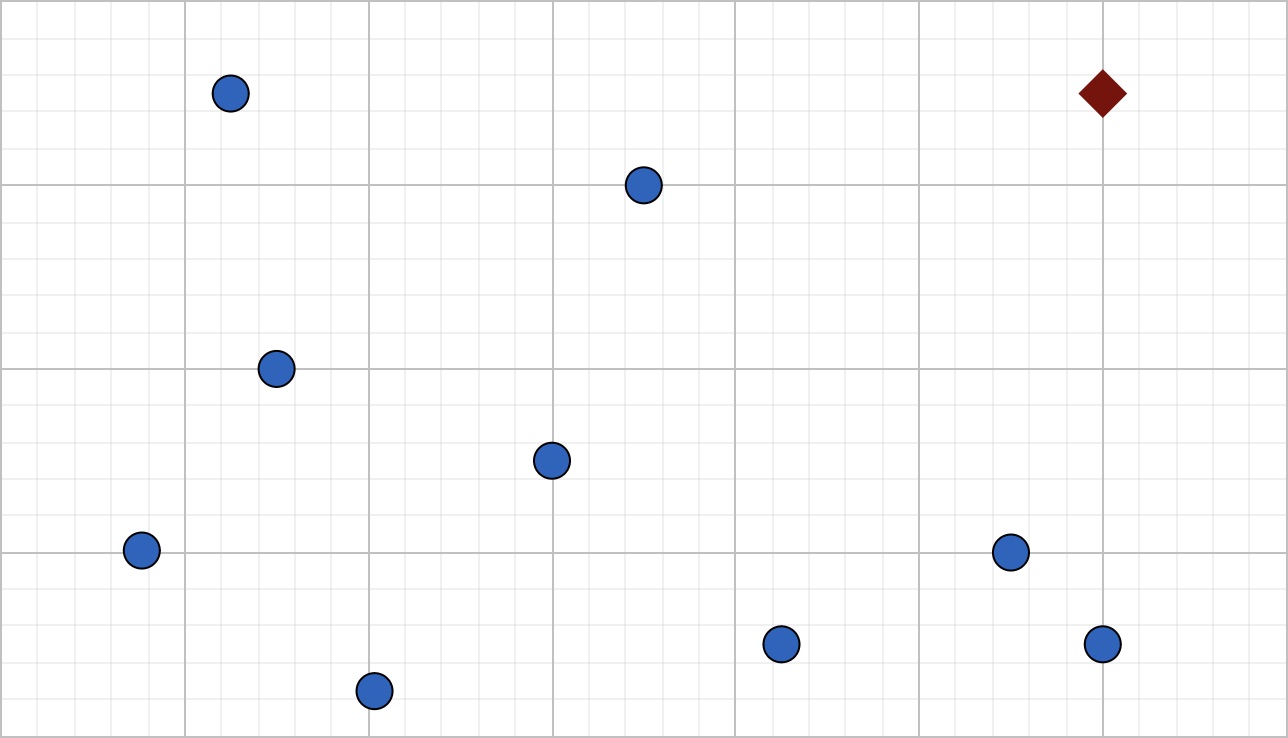}
}
~
\subfloat{
\includegraphics[width = 0.45\textwidth]{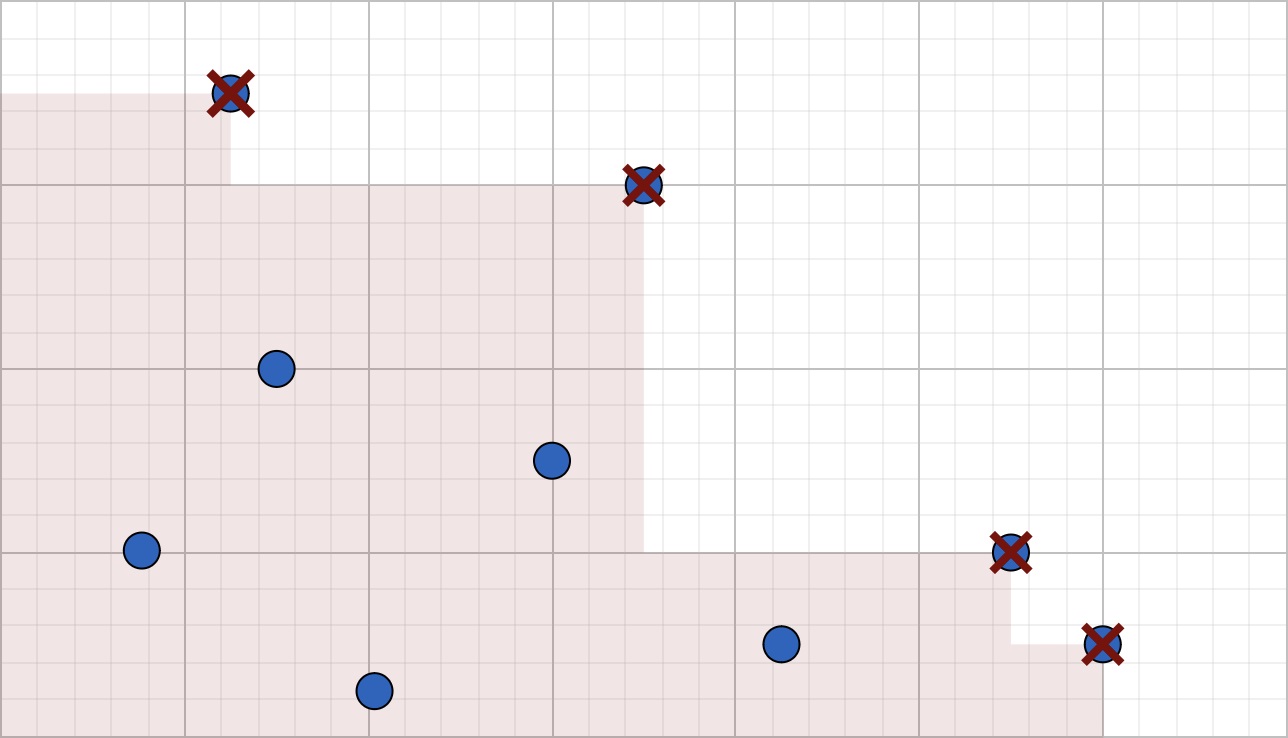}
}
\caption{\label{fig:0} The blue circles represent a set of vectors $A\subseteq \bR^2$ over which we take supremum with respect to coordinate-wise order.  
	Left: Red square depicts the \textit{vector} smallest upper bound for the collection $A$. Right: Red crossed circles represent the maximal (non-dominated) elements, while the shaded area is the lower set they generate that is a \textit{set} smallest upper bound for $A$.  }
\end{figure}

In this study, our primary focus is on the first approach---a vector-valued, ideal point, supremum. We chose this approach due to its tractability, leading to a vector or multi-objective optimization problem, rather than set-valued optimization problems. The second alternative, where the robust problem results in a set optimization problem, is a challenge that we defer to future work. We discuss further connections to the robust multi-objective literature in Section~\ref{sec:component-wise}.

\begin{definition}\label{def:supremum0}
Given a finite collection of (random) vectors $\{X^\theta\}_{\theta\in \Theta} \subseteq \sL_t(\bR^d)$, \textit{a supremum} with respect to the preorder  $\preceq^t$ is a (random) vector $V\in \sL_t(\bR^d)$ satisfying:
\begin{enumerate}[(i)]
\item  $X^\theta \preceq^t V$, for all $\theta \in \Theta$;
\item If for some $A \in \sL_t(\bR^d)$ we have $X^\theta \preceq^t A$ for all $\theta \in \Theta$, then  $V \preceq^t A$. 
\end{enumerate}
The set of all such suprema will be denoted by ${\Vsup\limits_{\theta\in\Theta}}^t X^\theta.$
\end{definition}

We note that in contrast to a supremum of a subset of $\bR$, a supremum in the sense of Definition~\ref{def:supremum0} may not exist, and when it exists it may be not unique. 
See Example~\ref{ex:supremum} below that illustrates this. Later in this section, we provide sufficient conditions for the existence of a suprema as well as for its uniqueness.

In what follows, we will denote by ${\vsup\limits_{\theta \in \Theta}}^t X^\theta $ one arbitrary chosen element from the set ${\Vsup\limits_{\theta \in \Theta}}^t X^\theta $. Unless otherwise stated, the obtained results do not depend on the choice the suprema, see Lemma~\ref{lemma:invariantSup}. 
If the time instance $t$ is clear, then we may simply write $\vsup_{\theta\in \Theta}$ and  $\Vsup_{\theta\in \Theta}$.

The two properties in the Definition~\ref{def:supremum0} correspond to the usual requirement that the \textit{supremum} is (i) an \textit{upper bound} and (ii) the \textit{smallest} among all upper bounds in $\sL_t(\bR^d)$. 
Conditions (i) and (ii) can be equivalently stated in terms of the ordering cone $\sL_t(C)$, respectively, as follows:
\begin{enumerate}[(i)]
\item 
$V \in \bigcap\limits_{\theta \in \Theta} (X^\theta + \sL_t(C)),$\
\item 
$\bigcap\limits_{\theta \in \Theta} (X^\theta + \sL_t(C)) \subseteq V + \sL_t(C)$.
\end{enumerate}

\begin{lemma}\label{lemma:invariantSup}
	For any $V,W\in {\Vsup\limits_{\theta \in \Theta}}^t X^\theta$, we have that 
	\begin{enumerate}[(a)]
		\item $V\preceq^t W$ and $W\preceq^t V$;
		\item $V\pm \sL_t(C) = W\pm \sL_t(C)$, and thus  $V\pm \sL_t(C) =  {\Vsup\limits_{\theta \in \Theta}}^t X^\theta \pm\sL_t(C)$;
		\item If $\preceq^t$ is a vector partial order, then $\vsup_\theta X^\theta$ with respect to $\preceq^t$ is unique $\bP$-a.s, if it exists.
		\item Assume that $\Vsup\limits_{\theta \in \Theta} X^\theta \neq \emptyset$. Then for all $b \in \bR^d$ it holds $\Vsup\limits_{\theta \in \Theta} (X^\theta +b) = \left(\Vsup\limits_{\theta \in \Theta} X^\theta \right) + b$. 
	\end{enumerate}
\end{lemma}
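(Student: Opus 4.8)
The plan is to prove the four parts in order, since each builds on the cone-based reformulation of Definition~\ref{def:supremum0} given immediately before the lemma. Throughout I would work with the characterizations (i) $V \in \bigcap_{\theta}(X^\theta + \sL_t(C))$ and (ii) $\bigcap_{\theta}(X^\theta + \sL_t(C)) \subseteq V + \sL_t(C)$, as these turn order statements into set inclusions that compose cleanly.

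For part (a), the idea is that $V$ and $W$ are both suprema, so each is an upper bound and each is smallest. First I would apply property (ii) for $V$ to the upper bound $A = W$: since $W$ satisfies (i), i.e.\ $X^\theta \preceq^t W$ for all $\theta$, the smallness of $V$ gives $V \preceq^t W$. By the symmetric argument, applying (ii) for $W$ to the upper bound $A = V$, I get $W \preceq^t V$. For part (b), I would use (a) together with compatibility of $\preceq^t$ with the vector structure (inherited from $\preceq$, as noted in the excerpt): $V \preceq^t W$ means $W - V \in \sL_t(C)$, hence $W \in V + \sL_t(C)$, which yields $W + \sL_t(C) \subseteq V + \sL_t(C)$ because $C$ is a convex cone closed under addition with itself ($C + C \subseteq C$). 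The reverse inclusion follows from $W \preceq^t V$, giving $V + \sL_t(C) = W + \sL_t(C)$; the analogous argument with $-\sL_t(C)$ handles the lower sets. The final identity with $\Vsup$ is then just the observation that these sets agree for every choice of representative.

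For part (c), the key is antisymmetry: when $\preceq^t$ is a partial order, $C$ is pointed, so $\sL_t(C) \cap (-\sL_t(C)) = \{0\}$ in the $\bP$-a.s.\ sense. From part (a), $V \preceq^t W$ and $W \preceq^t V$ give $W - V \in \sL_t(C)$ and $V - W \in \sL_t(C)$; antisymmetry (equivalently pointedness of the cone) forces $V - W = 0$, i.e.\ $V = W$ $\bP$-a.s. For part (d), I would verify the two defining properties of a supremum directly for the candidate $\big(\Vsup_\theta X^\theta\big) + b$. Writing $V$ for a supremum of $\{X^\theta\}$, property (i) $X^\theta \preceq^t V$ gives $X^\theta + b \preceq^t V + b$ by translation compatibility, so $V + b$ is an upper bound for $\{X^\theta + b\}$. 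For smallness, if $X^\theta + b \preceq^t A$ for all $\theta$, then $X^\theta \preceq^t A - b$, so $V \preceq^t A - b$ by smallness of $V$, hence $V + b \preceq^t A$. This shows $V + b \in \Vsup_\theta(X^\theta + b)$, and combined with part (b) describing the full supremum set as a translate, I would conclude $\Vsup_\theta(X^\theta + b) = \big(\Vsup_\theta X^\theta\big) + b$.

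None of the steps is genuinely hard; the work is entirely in carefully invoking the right defining property and the cone arithmetic $C + C \subseteq C$ and $C \cap (-C) = \{0\}$. The one point demanding slight care is part (d): the statement is a set equality, so I must argue both that every translate of a supremum is a supremum of the translated family and that every supremum of the translated family arises this way. The cleanest route is to prove the inclusion $\big(\Vsup_\theta X^\theta\big) + b \subseteq \Vsup_\theta(X^\theta + b)$ by the direct verification above, then obtain the reverse inclusion by applying the same argument to the shift by $-b$ (replacing $b$ with $-b$ and the family $\{X^\theta\}$ with $\{X^\theta + b\}$), which is where I expect the only real bookkeeping.
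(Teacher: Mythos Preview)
Your proposal is correct and follows essentially the same approach as the paper's proof, which is very terse: the paper simply says (a) follows from the definition of $\vsup$, (b) follows from (a) and the definition of the preorder, (c) from antisymmetry, and (d) from compatibility with the vector structure. Your expansion of each step, including the explicit bookkeeping for the two inclusions in part (d) via the shift by $-b$, is exactly the detail the paper omits.
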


\begin{proof} (a) follows from the definition of $\vsup$. By (a) and definition   of the preorder we get $V+ \sL_t(C) = W +  \sL_t(C)$. Consequently  $V -\sL_t(C) = W -  \sL_t(C)$. 
If $\preceq^t$ is a vector partial order, then $\preceq^t$ is antisymmetric which implies (c). Finally, (d) follows from compatibility of the preorder with the vector structure.
\end{proof}

Similarly, one could consider the notion of \textit{ideal point} infimum defined as  $\vinf_{\theta\in\Theta}X^\theta := - \vsup_{\theta\in \Theta}(-X^\theta)$. However, in this work we use the \textit{ideal point} supremum only for the robust part (inner sup in \eqref{eq:rob_multi}), while we treat the outer inf in \eqref{eq:rob_multi} as minimization of a vector-valued objective function in the standard sense of vector optimization, see Section~\ref{sec:VOP}.

\begin{example}\label{ex:supremum}

i) To illustrate that an ideal point supremum \textit{may not exist}, consider the partial order generated by the convex, pointed, polyhedral ordering cone
$$
C = \text{cone } \left\lbrace \begin{pmatrix}1 \\ 0 \\ 1\end{pmatrix}, \begin{pmatrix}0 \\ 1 \\ 1\end{pmatrix}, \begin{pmatrix}-1 \\ 0 \\ 1\end{pmatrix}, \begin{pmatrix}0 \\ -1 \\ 1\end{pmatrix}, \begin{pmatrix}0.75 \\ 0.75 \\ 1\end{pmatrix},  \begin{pmatrix}-0.75 \\ 0.75 \\ 1\end{pmatrix},  \begin{pmatrix}-0.75 \\ -0.75 \\ 1\end{pmatrix},  \begin{pmatrix}0.75 \\ -0.75 \\ 1\end{pmatrix} \right\rbrace
$$
alongside two points $X = (0,0,0)^T$ and $Y = (0.25, -0.25, 0)^T$. It can be verified directly that the intersection $(X+C)\cap(Y+C)$ is a polyhedron with three vertices rather than a shifted cone; see Figure~\ref{fig:1} for a graphical illustration. Therefore, a supremum of vectors $X$ and $Y$ with respect to $\preceq_C$ does not exist.
\begin{figure}[h]
\centering
\subfloat{
\centering
\includegraphics[width = 0.4\textwidth]{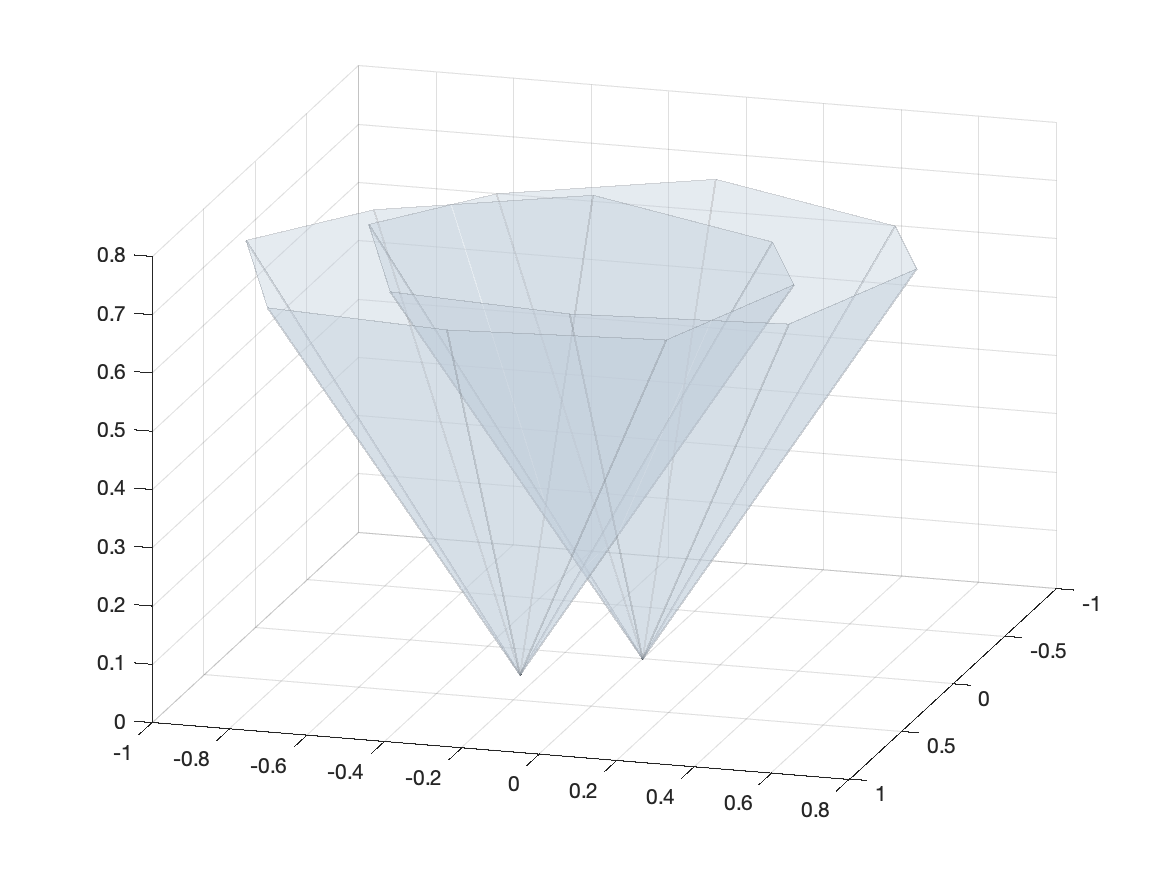}
}
~
\subfloat{
\includegraphics[width = 0.4\textwidth]{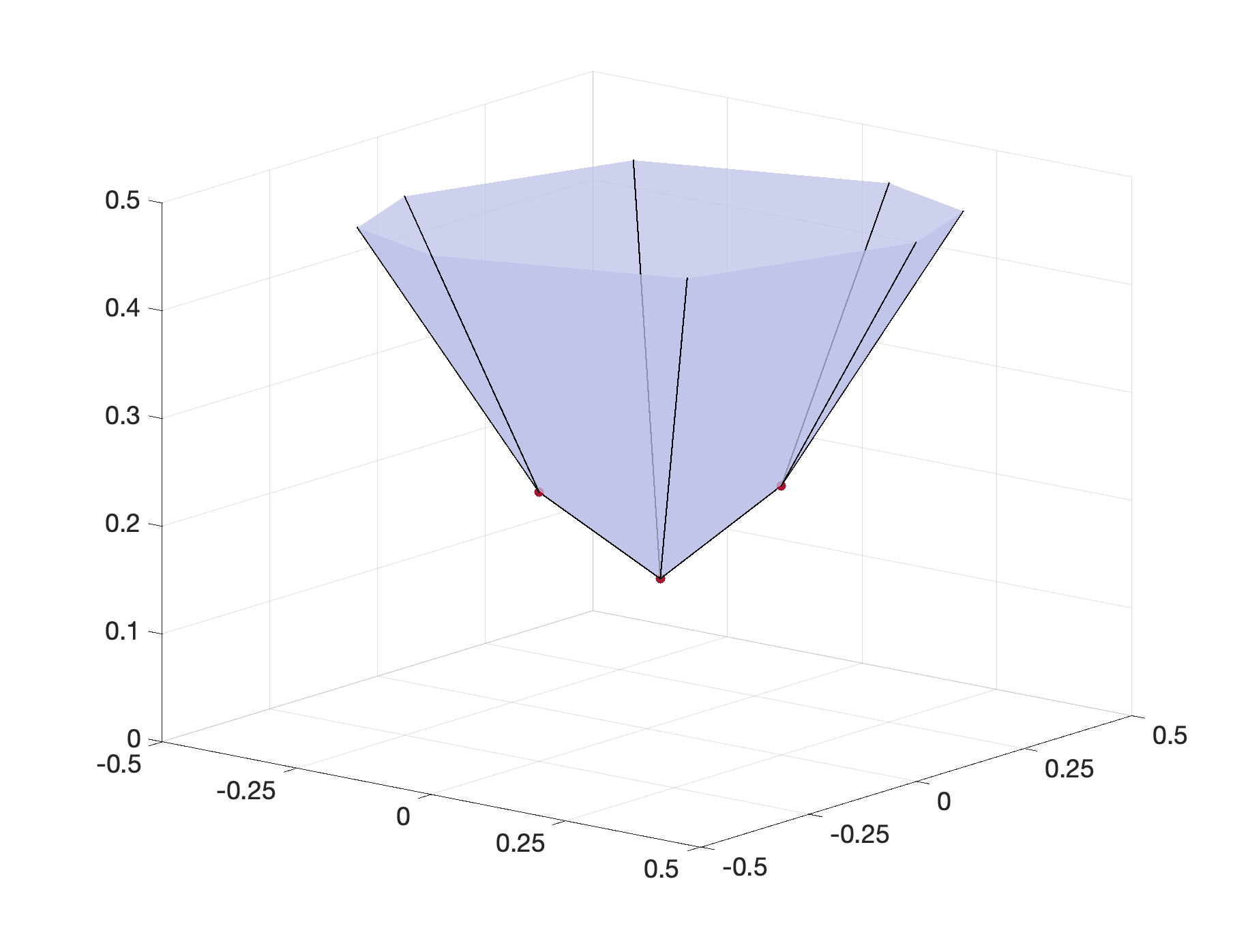}}
\caption{\label{fig:1} Shifted cones $X + C$ and $Y + C$ (left) and their intersection $(X+C)\cap(Y+C)$ (right).}
\end{figure}

\smallskip \noindent
ii) To illustrate that the ideal point supremum \textit{may not be unique}, consider a preorder generated by a (convex and polyhedral) ordering cone that is a half-space,
$$
C = \left\lbrace x \in \bR^d \; : \; \inprod{w}{x} \geq 0  \right\rbrace
$$
for some $w \in \bR^d \setminus \{0\}$. Then for any (finite) collection $\{X^\theta\}_{\theta \in \Theta} \subseteq \bR^d$   all points on the hyperplane
$$\left\lbrace x \in \bR^d \; : \; \inprod{w}{x} = \max \{\inprod{w}{X^\theta } : \theta \in \Theta\} \right\rbrace$$
are suprema of $\{X^\theta\}_{\theta \in \Theta}$ with respect to the preorder generated by $C$.

\end{example}

In the context of the existence of the supremum, let us mention two relevant cases, where the existence of the ideal point supremum is guaranteed. First, if we take the ordering cone $C = \bR^d_+$, which corresponds to component-wise partial order $\leq$, then the supremum exists, is unique and can be computed explicitly in a component-wise fashion, see Section~\ref{sec:component-wise} for more details. Second, for $d=2$ objectives, solid convex cones in $\bR^2$ are polyhedral and existence of suprema follows from Lemma~\ref{lemma:cone2Sufficeint} below. 

\begin{lemma} \label{lemma:cone1}
	Assume that the ordering cone $C \subseteq \bR^d$ is solid. Then for any $X,Y \in \sL_t(\bR^d)$ it holds $(X + \sL_t(C)) \cap (Y + \sL_t(C)) \neq \emptyset$.
\end{lemma}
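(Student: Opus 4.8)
The plan is to exhibit an explicit random vector $W \in \sL_t(\bR^d)$ lying in both $X + \sL_t(C)$ and $Y + \sL_t(C)$; equivalently, to produce $W$ with $W - X \in \sL_t(C)$ and $W - Y \in \sL_t(C)$, i.e.\ $W-X$ and $W-Y$ taking values in $C$ almost surely. The geometric idea is that a solid cone contains an entire ball around any interior direction, so pushing sufficiently far along that direction dominates both $X$ and $Y$ at once.

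First I would use solidity to fix a point $c \in \operatorname{int} C$ together with a radius $r>0$ such that the closed ball $\bar B(c,r) \subseteq C$. The candidate is
$$
W := X + \lambda c, \qquad \lambda := \frac{\|X-Y\|}{r},
$$
with $\|\cdot\|$ the Euclidean norm. Measurability of the scalar weight $\lambda$ is immediate from that of $X,Y$, and integrability of $W$ follows since $\|X-Y\| \le \|X\| + \|Y\| \in L^1$ in finite dimension, so $\lambda c \in \sL_t(\bR^d)$ and hence $W \in \sL_t(\bR^d)$.

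Then I would verify the two cone memberships pointwise a.s. For the first, $W - X = \lambda c$, and since $\lambda \ge 0$, $c \in C$, and $C$ is a cone, $\lambda c \in C$. For the second, on $\{X=Y\}$ we have $\lambda = 0$ and $W-Y = 0 \in C$, while on $\{X \neq Y\}$ we have $\lambda > 0$ and
$$
W - Y = (X-Y) + \lambda c = \lambda\Big( c + \tfrac{1}{\lambda}(X-Y)\Big),
$$
where $\big\| \tfrac{1}{\lambda}(X-Y)\big\| = \|X-Y\|/\lambda = r$, so $c + \tfrac{1}{\lambda}(X-Y) \in \bar B(c,r) \subseteq C$, and multiplying by $\lambda > 0$ keeps it in $C$. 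Both differences are measurable and integrable, hence belong to $\sL_t(C)$, giving $W \in (X + \sL_t(C)) \cap (Y + \sL_t(C))$.

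I do not expect a serious obstacle: the only subtlety is that the required push $\lambda$ depends on $\omega$ through $\|X-Y\|$, so a single deterministic scalar would not suffice; the resolution is precisely to let $\lambda$ be the explicit $\sF_t$-measurable function above, which simultaneously enforces the cone condition pointwise and preserves $L^1$-integrability. Choosing a \emph{closed} ball $\bar B(c,r) \subseteq C$ (rather than an open one) is what lets me take $\lambda = \|X-Y\|/r$ exactly, landing on the boundary sphere instead of forcing a strict inequality.
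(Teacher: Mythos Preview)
Your proof is correct and rests on the same geometric idea as the paper's: pick an interior point of $C$ surrounded by a ball contained in $C$, then translate far enough along that direction so that both $W-X$ and $W-Y$ land in the cone. The only difference is implementation: the paper works with a norm on the function space $\sL_t(\bR^d)$ and a \emph{deterministic} scaling factor $\|X-Y\|$, whereas you work pointwise in $\bR^d$ with the Euclidean norm and let the push $\lambda(\omega)=\|X(\omega)-Y(\omega)\|/r$ be $\sF_t$-measurable. Your pointwise route is arguably cleaner here, since it makes the a.s.\ membership in $C$ and the $L^1$-integrability of $W$ transparent; the paper's function-space formulation requires the ball $B_1$ in the chosen norm to sit inside $\sL_t(C)$ after a shift, which is delicate for an $L^1$-type norm.
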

\begin{proof}
	Let $\norm{\cdot}$ be a norm on $\sL_t(\bR^d)$ and $B_1 := \{ X \in \sL_t(\bR^d) : \norm{X} \leq 1\}$ be the closed unit ball in that norm.  Since $C$ is solid, then there exists $Z \in \sL_t(\Omega;\text{Int } C)$, Without loss of generality, we assume that  $Z$ is scaled in such a way that $Z + B_1 \subseteq \sL_t(C)$. 
	Then $V := Y + \norm{X -Y} \cdot Z = X + \norm{X -Y} \cdot \left( Z + \frac{Y - X}{\norm{X - Y}} \right)\in (X + \sL_t(C)) \cap (Y + \sL_t(C))$, since $Y + \norm{X -Y} \cdot Z \in Y + \sL_t(C)$ and $X + \norm{X -Y} \cdot \left( Z + \frac{Y - X}{\norm{X - Y}} \right) \in X + \text{ cone}(Z + B_1) \subseteq X + \sL_t(C)$. The proof is complete. 
\end{proof}

We recall that a cone $C$ is non-trivial, if $\{0\} \subsetneq C \subsetneq \bR^d$, and it is polyhedral if it is equal to the intersection of finite number of halfspaces, where a half space is a set of the form $\{ z \in\bR^d : \inprod{a}{z} \geq \alpha \}$ for some $a \in \bR^d \setminus\{0\}$ and $\alpha \in \bR$. 

\begin{lemma}\label{lemma:cone2Sufficeint}
Assume that the ordering cone $C\subseteq \bR^d$ is solid, convex and non-trivial . Additionally, suppose that $C$ is a polyhedral such that the dual cone $C^+:= \{ b \in \bR^d : \inprod{b}{c} \geq 0, \; \forall c \in C\}$ is generated by some linearly independent vectors $b_1, \dots, b_k \in \bR^d \setminus \{0\}$.
Then for any  $X,Y\in \sL_t(\bR^d)$, there exists $V\in \sL_t(C)$ such that $(X+\sL_t(C)) \cap (Y+ \sL_t(C)) = V + \sL_t(C).$ 
\end{lemma}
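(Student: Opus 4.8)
The plan is to turn the two shifted cones into systems of scalar linear inequalities and then solve a linear system to produce $V$. Since a polyhedral cone is closed convex, it equals its bidual, so the hypothesis $C^+ = \mathrm{cone}(b_1,\dots,b_k)$ yields the halfspace representation $C = \{x \in \bR^d : \inprod{b_i}{x} \geq 0, \ i = 1,\dots,k\}$. Consequently $\sL_t(C) = \{W \in \sL_t(\bR^d) : \inprod{b_i}{W} \geq 0 \text{ a.s., } i=1,\dots,k\}$, and for any $Z \in \sL_t(\bR^d)$ we have $Z \in X + \sL_t(C)$ iff $\inprod{b_i}{Z} \geq \inprod{b_i}{X}$ a.s. for every $i$, and similarly for $Y$. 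Writing $m_i := \max\{\inprod{b_i}{X}, \inprod{b_i}{Y}\}$, which is $\sF_t$-measurable and integrable, the intersection becomes
\[
(X + \sL_t(C)) \cap (Y + \sL_t(C)) = \{Z \in \sL_t(\bR^d) : \inprod{b_i}{Z} \geq m_i \text{ a.s.}, \ i = 1,\dots,k\}.
\]

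Next I would produce a single vector $V$ whose image under each $b_i$ is exactly $m_i$. Let $B$ be the $k \times d$ matrix with rows $b_1^\top,\dots,b_k^\top$; linear independence of the $b_i$ means $B$ has full row rank $k$, so the Gram matrix $BB^\top$ is invertible and $B^\dagger := B^\top (BB^\top)^{-1}$ is a right inverse, $B B^\dagger = I_k$. Setting $V := B^\dagger m$ with $m = (m_1,\dots,m_k)^\top$ gives an element of $\sL_t(\bR^d)$ — a fixed linear image of the measurable, integrable vector $m$ — satisfying $\inprod{b_i}{V} = m_i$ a.s. for all $i$. This is the one place where linear independence is essential: it is precisely what guarantees the system $\inprod{b_i}{V} = m_i$ is solvable for an arbitrary right-hand side, so that the intersection collapses to a single shifted cone rather than a polyhedron with several vertices — the phenomenon responsible for the non-existence of suprema in Example~\ref{ex:supremum}(i).

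Finally I would verify that $V + \sL_t(C)$ equals the intersection. Using $\inprod{b_i}{V} = m_i$, we get $Z \in V + \sL_t(C)$ iff $\inprod{b_i}{Z - V} \geq 0$ iff $\inprod{b_i}{Z} \geq m_i$ a.s. for all $i$, which is exactly the description of the intersection above; hence $(X + \sL_t(C)) \cap (Y + \sL_t(C)) = V + \sL_t(C)$, and in particular $V$ satisfies conditions (i)--(ii) of Definition~\ref{def:supremum0}, so it is a supremum of $X$ and $Y$. The individual steps are routine; the only real content — and the main obstacle if one tries to weaken the hypotheses — is the solvability argument of the middle paragraph, namely recognizing that linear independence of the dual generators is what forces the intersection of two shifted cones to again be a shifted cone. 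I would also note that this construction lands $V$ in $\sL_t(\bR^d)$; the sharper membership $V \in \sL_t(C)$ asserted in the statement holds under the additional assumption $X, Y \in \sL_t(C)$, which makes each $m_i \geq 0$ a.s.
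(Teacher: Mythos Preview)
Your proof is correct and follows essentially the same route as the paper: reduce the shifted cones to finitely many scalar halfspace constraints via the dual generators, take the pointwise maximum $m_i$ of the two thresholds, use linear independence of the $b_i$ to solve $\inprod{b_i}{V}=m_i$, and verify that $V+\sL_t(C)$ coincides with the intersection. Your explicit pseudoinverse construction $V=B^\top(BB^\top)^{-1}m$ is a mild sharpening of the paper's argument, which merely asserts solvability of the linear system; and your closing remark that the proof actually yields $V\in\sL_t(\bR^d)$ rather than $V\in\sL_t(C)$ correctly identifies a slip in the statement that the paper's own proof does not address either.
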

\begin{proof}
For $a \in \bR^d \setminus\{0\}$ and $\alpha \in \sL_t(\bR)$ we denote the halfspace by $H^+_\alpha (a) = \{ Z \in \sL_t(\bR^d) : \inprod{a}{Z} \geq \alpha, \; \bP\text{-a.s.} \}$ and the corresponding `hyperplane' by  $H^=_\alpha (a) = \{ Z \in \sL_t(\bR^d) : \inprod{a}{Z} = \alpha, \; \bP\text{-a.s.} \}$ for sets of random vectors.

In view of the assumptions, the polyhedral convex cone $C$ can be written as 
$$
C = \bigcap\limits_{i = 1, \dots, k} H^+_0(b_i).
$$
Let $0 \in \sL_t(\bR)$ denote a random variable constantly equal to zero. Then it also holds 
$$\sL_t(C) = \bigcap\limits_{i = 1, \dots, k} H^+_{0} (b_i) = \{ Z \in \sL_t(\bR^d) \; : \; \inprod{b_i}{Z} \geq 0, \bP\text{-a.s.}, \;  i = 1, \dots, k  \}.$$ 
First, we show that $X + \sL_t(C) = \bigcap\limits_{i = 1, \dots, k} H^+_{\inprod{b_i}{X}}(b_i)$. Take arbitrary $\bar{c} \in \sL_t(C)$, for $i = 1, \dots, k$ it holds $\inprod{b_i}{X + \bar{c}} \geq \inprod{b_i}{X}$, which shows $X + \sL_t(C) \subseteq \bigcap\limits_{i = 1, \dots, k} H^+_{\inprod{b_i}{X}}(b_i)$. Now take $Z \in \bigcap\limits_{i = 1, \dots, k} H^+_{\inprod{b_i}{X}}(b_i)$, then for $i = 1, \dots, k$ it holds $\inprod{b_i}{Z - X} \geq 0$, so $Z-X \in \sL_t(C)$ and $X + \sL_t(C) \supseteq \bigcap\limits_{i = 1, \dots, k} H^+_{\inprod{b_i}{X}}(b_i)$. Equality $Y + \sL_t(C) = \bigcap\limits_{i = 1, \dots, k} H^+_{\inprod{b_i}{Y}}(b_i)$ follows analogously.

Now consider $\omega \in \Omega$ and for $i = 1, \dots, k$ set $\alpha_i(\omega) := \max \{ \inprod{b_i}{X(\omega)}, \inprod{b_i}{Y(\omega)} \}$.
Note that $\alpha_i\in\sL_t(\bR)$. 
  Then it holds 
\begin{equation}\label{eq:cone2-1}
	(X + \sL_t(C)) \cap (Y + \sL_t(C)) = \bigcap\limits_{i = 1, \dots, k} H^+_{\alpha_i}(b_i).
\end{equation}	
	
In view of Lemma~\ref{lemma:cone1}, the intersection in the right hand side of \eqref{eq:cone2-1} is non-empty. 
Next, let us consider the intersection of the corresponding hyperplanes,
\begin{align*}
\bigcap\limits_{i = 1, \dots, k} H^=_{\alpha_i}(b_i) = 
\left\lbrace Z \in \sL_t(\bR^d) \quad\vert\quad \inprod{b_i}{Z(\omega)} = \alpha_i(\omega), \bP\text{-a.s.}, \quad  i = 1, \dots, k  \right\rbrace.
\end{align*}
Since $b_1, \dots, b_k$ are linearly independent, the system of linear equations $\inprod{b_i}{Z(\omega)} = \alpha_i(\omega), \quad  i = 1, \dots, k$ has an $\sF_t$-measurable solution for arbitrary $\sF_t$-measurable right-hand side $(\alpha_1, \dots, \alpha_k)$. Therefore, $\bigcap\limits_{i = 1, \dots, k} H^=_{\alpha_i}(b_i) \neq \emptyset$, $\bP$-a.s..

Finally, we show that for arbitrary $ V \in \bigcap\limits_{i = 1, \dots, k} H^=_{\alpha_i}(b_i)$ we have $\bigcap\limits_{i = 1, \dots, k} H^+_{\alpha_i}(b_i) =  V + \sL_t(C)$. Take any $Z \in \bigcap\limits_{i = 1, \dots, k} H^+_{\alpha_i}(b_i)$, then for all $i = 1, \dots, k$ it holds $\inprod{b_i}{Z-V} \geq 0$, so $Z-V \in \sL_t(C)$ and $\bigcap\limits_{i = 1, \dots, k} H^+_{\alpha_i}(b_i) \subseteq  V + \sL_t(C)$. On the other hand, for any $\bar{c} \in \sL_t(C)$ we have for all $i=1, \dots, k$ that $\inprod{b_i}{V + \bar{c}} \geq \inprod{b_i}{V} = \alpha_i$, so $\bigcap\limits_{i = 1, \dots, k} H^+_{\alpha_i}(b_i) \supseteq  V + \sL_t(C)$.

\end{proof}

Lemma~\ref{lemma:cone2Sufficeint} provides sufficient conditions for existence of an \textit{ideal point} supremum. Indeed, the result extends by induction from two random vectors to a finite collection of random vectors and hence under the assumptions of Lemma~\ref{lemma:cone2Sufficeint} for a finite collection $\{X^\theta\}_{\theta \in \Theta} \subseteq \sL_t(\bR^d)$ there exists a random vector $V \in \sL_t(\bR^d)$ satisfying
$$\bigcap\limits_{\theta \in \Theta} (X^\theta + \sL_t(C)) \subseteq V + \sL_t(C),$$
which means that $V$ is a supremum of $\{X^\theta\}_{\theta \in \Theta}$ with respect to $\preceq^t$ in the sense of Definition~\ref{def:supremum0}.

\begin{remark}\label{rem:2d-sup}
In the bi-objective case $d=2$, each solid, closed, convex cone $C \subseteq \bR^2$ is either a half-space or an intersection of two half-spaces. Therefore, by Lemma~\ref{lemma:cone2Sufficeint} an ideal point supremum in this case always exists. Moreover, for any pair $x, y \in \bR^2$ we can explicitly compute a vector $z \in \bR^2$ such that $(x+C) \cap (y+C) = z+C$ through a solution of two linear equations. By analogous  arguments, $\vsup$ exists and can be explicitly computed for any finite collection of two-dimensional random vectors. 
\end{remark}

The remainder of this section provides additional properties of $\vsup$ needed in the sequel, such as monotonicity, uniqueness and equivalent characterizations.

\begin{lemma}\label{lemma:monotone}
Assume that $\{X^\theta \}_{\theta \in \Theta}, \{Y^\theta \}_{\theta \in \Theta} \subseteq \sL_t(\bR^d)$, and $X^\theta \preceq^t Y^\theta$ for all $\theta \in \Theta$, then $\vsup\limits_{\theta \in \Theta} X^\theta \preceq^t \vsup\limits_{\theta \in \Theta} Y^\theta$, assuming that both suprema exist. 
\end{lemma}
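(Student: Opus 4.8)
The plan is to reduce the statement to the two defining properties of the supremum in Definition~\ref{def:supremum0}, using only reflexivity/transitivity of the preorder $\preceq^t$. Fix arbitrary representatives $V_X \in {\Vsup\limits_{\theta \in \Theta}}^t X^\theta$ and $V_Y \in {\Vsup\limits_{\theta \in \Theta}}^t Y^\theta$; since both suprema are assumed to exist, such choices are possible, and by Lemma~\ref{lemma:invariantSup}(a) the conclusion will not depend on which representatives are selected, so it suffices to prove $V_X \preceq^t V_Y$ for these fixed choices.

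The key observation is that $V_Y$ is itself an upper bound for the collection $\{X^\theta\}_{\theta \in \Theta}$. Indeed, by property (i) of Definition~\ref{def:supremum0} applied to $V_Y$ we have $Y^\theta \preceq^t V_Y$ for every $\theta \in \Theta$. Combining this with the hypothesis $X^\theta \preceq^t Y^\theta$ and invoking transitivity of $\preceq^t$ (which, as noted in Section~\ref{sec:preorders}, is inherited from $\preceq$), we obtain $X^\theta \preceq^t V_Y$ for all $\theta \in \Theta$.

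First, then, I would record that $V_Y \in \sL_t(\bR^d)$ satisfies $X^\theta \preceq^t V_Y$ for all $\theta$, i.e.\ $V_Y$ plays the role of the element $A$ in property (ii) of the supremum for the family $\{X^\theta\}_{\theta \in \Theta}$. Applying the minimality property (ii) of $V_X$ with this choice of $A = V_Y$ yields immediately $V_X \preceq^t V_Y$, which is the desired conclusion $\vsup_\theta X^\theta \preceq^t \vsup_\theta Y^\theta$.

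I do not expect any genuine obstacle here: the result is a direct monotonicity argument, and the proof is essentially a two-line chaining of the upper-bound property, transitivity, and the least-upper-bound property. The only point warranting a remark is the well-definedness of the statement under non-uniqueness of suprema, which is handled a priori by Lemma~\ref{lemma:invariantSup}(a); alternatively, as carried out above, one simply verifies the inequality for arbitrary fixed representatives, which makes the dependence on the choice irrelevant.
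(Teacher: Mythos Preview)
Your proof is correct and follows essentially the same approach as the paper's: use transitivity to show that $\vsup_\theta Y^\theta$ is an upper bound for $\{X^\theta\}_{\theta\in\Theta}$, then apply the least-upper-bound property (ii) of Definition~\ref{def:supremum0}. Your additional care with representatives and the reference to Lemma~\ref{lemma:invariantSup}(a) is a nice touch but not strictly needed, since the paper's convention is that $\vsup$ denotes an arbitrary fixed element of $\Vsup$.
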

\begin{proof}
For any $\theta \in \Theta$, by transitivity $X^\theta \preceq^t Y^\theta$ and $Y^\theta \preceq^t \vsup\limits_{\theta \in \Theta}Y^\theta$ imply $X^\theta \preceq^t \vsup\limits_{\theta \in \Theta}Y^\theta$. Thus,  $\vsup\limits_{\theta \in \Theta}Y^\theta$ is an upper bound of $\{X^\theta \}_{\theta \in \Theta} \subseteq \sL_t(\bR^d)$. 
The result follows from Definition~\ref{def:supremum0}(ii).
\end{proof}

\begin{lemma}\label{lemma:supt+1}
	Let $\set{X^\theta}\subseteq \sL_{t+1}(\bR^d)$, and assume that $V,W\in{\Vsup\limits_{\theta\in \Theta}}^{t+1} X^\theta$. Then 
	\[
	{\Vsup_{\theta\in \Theta}}^t \bE^\theta_t[V] + \sL_t(C)=	{\Vsup_{\theta\in \Theta}}^t \bE^\theta_t[W] + \sL_t(C), 
	\]
		assuming both sets are not empty. 
\end{lemma}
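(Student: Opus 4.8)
The plan is to transport the invariance of the $(t{+}1)$-supremum through the conditional expectation and then through the $t$-supremum, relying entirely on the two auxiliary lemmas already at hand. The guiding observation is that any two random vectors $a,b\in\sL_t(\bR^d)$ satisfying both $a\preceq^t b$ and $b\preceq^t a$ generate the same cone shift, i.e.\ $a+\sL_t(C)=b+\sL_t(C)$. This is exactly the argument used in Lemma~\ref{lemma:invariantSup}(b) and follows from compatibility of $\preceq^t$ with the vector structure; it is what lets us state an equality of sets even when $\preceq^t$ is not antisymmetric.

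First I would invoke Lemma~\ref{lemma:invariantSup}(a): since $V,W\in{\Vsup}^{t+1}_{\theta}X^\theta$, we have $V\preceq^{t+1}W$ and $W\preceq^{t+1}V$. Next, applying Lemma~\ref{lemma:order_exp} to each $\theta\in\Theta$ and to both of these relations yields $\bE^\theta_t[V]\preceq^t\bE^\theta_t[W]$ and $\bE^\theta_t[W]\preceq^t\bE^\theta_t[V]$. Thus the two families $\{\bE^\theta_t[V]\}_{\theta\in\Theta}$ and $\{\bE^\theta_t[W]\}_{\theta\in\Theta}$ dominate one another term by term.

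Then, using Lemma~\ref{lemma:monotone} in both directions (legitimate since both suprema are assumed to exist), I obtain $\vsup^t_\theta\bE^\theta_t[V]\preceq^t\vsup^t_\theta\bE^\theta_t[W]$ together with the reverse inequality, where $\vsup$ denotes any fixed representatives. By the key observation above this gives $\vsup^t_\theta\bE^\theta_t[V]+\sL_t(C)=\vsup^t_\theta\bE^\theta_t[W]+\sL_t(C)$. Finally, Lemma~\ref{lemma:invariantSup}(b) identifies, for either choice, the cone-shifted supremum \emph{set} ${\Vsup}^t_\theta\bE^\theta_t[\cdot]+\sL_t(C)$ with the cone-shifted representative $\vsup^t_\theta\bE^\theta_t[\cdot]+\sL_t(C)$, which upgrades the equality from single representatives to the full supremum sets and completes the argument.

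There is no serious analytic obstacle here; the proof is essentially bookkeeping with the preorder. The one point deserving care is the passage from $V,W$ being merely suprema at level $t{+}1$ (hence equal only modulo the cone, not $\bP$-a.s., as $\preceq^{t+1}$ need not be antisymmetric) to a genuine equality of cone-shifted sets at level $t$. This is precisely why Lemma~\ref{lemma:order_exp} is the right tool: rather than computing $\bE^\theta_t$ explicitly, it faithfully transports the two-sided domination through the (possibly non-injective) conditional expectation, and the set-level phrasing absorbs the residual non-uniqueness.
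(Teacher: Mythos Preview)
Your proposal is correct and follows essentially the same approach as the paper: invoke Lemma~\ref{lemma:invariantSup}(a) to get two-sided $\preceq^{t+1}$-domination between $V$ and $W$, push it through $\bE^\theta_t$ via Lemma~\ref{lemma:order_exp}, then through $\vsup^t$ via Lemma~\ref{lemma:monotone}, and conclude equality of the cone-shifted sets using Lemma~\ref{lemma:invariantSup}(b). The paper compresses the first three steps into a single sentence, but the logical structure is identical.
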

\begin{proof}
By Lemma~\ref{lemma:invariantSup}(a), Lemma \ref{lemma:order_exp} and Lemma~\ref{lemma:monotone},then 
\[
{\vsup_{\theta\in\Theta}}^t\bE^\theta_t[V] \preceq^t{\vsup_{\theta\in\Theta}}^t\bE^\theta_t[W], \quad \textrm{and} \quad {\vsup_{\theta\in\Theta}}^t\bE^\theta_t[W] \preceq^t {\vsup_{\theta\in\Theta}}^t\bE^\theta_t[V],
\]
and note that all elements exist. Consequently, by the definition of $\preceq^t$, we get 
\[
{\vsup_{\theta\in\Theta}}^t\bE^\theta_t[V] + \sL_t(C) = {\vsup_{\theta\in\Theta}}^t\bE^\theta_t[W] +\sL_t(C),
\]
and in view of Lemma \ref{lemma:invariantSup}(b) the proof is complete. 
\end{proof}

\subsection{Vector optimization problem}\label{sec:VOP}
In this section, we discuss how to interpret the optimization with respect to feasible controls $\varphi\in\mathfrak{A}$ (the infimum in \eqref{eq:rob_multi}) in the context of our multi-objective robust stochastic control problem. Since our choice for supremum across models was a vector, the infimum across controls will be viewed as a vector optimization problem (VOP). 

A VOP is a problem of the form
\begin{align}
	\label{VOP}
	\tag{VOP}
	\text{minimize } \; F(x) \quad \text{ with respect to } \; \preceq \quad \text{ subject to } \; x \in \bX,
\end{align}
where $\bX \subseteq \bR^n$ is a nonempty feasible set, $F: \bX \to \bR^d$ is a vector-valued mapping, and $\preceq$ is a fixed preorder on $\bR^d$ with the corresponding ordering cone $C$.  
The \textit{image of the feasible set} $\bX$ is the set $F[\bX] = \{ F(x) : x \in \bX\}$ and the \textit{upper image} of~\eqref{VOP} is $\cP = \text{cl } (F[\bX] + C)$, where $ \text{cl }(A)$ denotes the closure of set $A\subseteq \bR^d$ with respect to a fixed topology in $\bR^d$. These two sets, the image of the feasible set and the upper image, play important roles within vector and set optimization theory. Next, we shortly outline their connection to the notions of Pareto optimal or efficient points and to a notion of solution of a vector optimization problem. 

A point $\bar{x} \in \bX$ is called a \textit{minimizer} for \eqref{VOP} if  $(F(\bar{x}) - C \setminus \{0\}) \cap F[\bX] = \emptyset$.  In various streams of literature, a minimizer is also referred to as a \textit{Pareto optimal point} or an \textit{efficient point}.  The set of all images of minimizers of \eqref{VOP} is often known as the \textit{Pareto frontier} or the \textit{efficient frontier}. There exists also the notion of a weak minimizer.

In numerous vector optimization problems the Pareto frontier is non-trivial containing mutually incomparable points with respect to considered preorder. This highlights the difficulties in defining what a \textit{solution} of~\eqref{VOP} should be. In certain contexts,  solving a VOP can be understood as finding one Pareto optimal point, while in other contexts, one aims to identify (or approximate) the entire Pareto frontier or the upper image.  
Consequently, it does not come as a surprise that the current literature contains several concepts of solutions for VOPs based on case by case relevant properties (e.g.~bounded versus unbounded, convex versus non-convex) of the VOPs themselves.  We refer the reader to \cite{Loehne2011, Jahn04}, and references therein, for more details.

In this work, our main objective is to establish a dynamic programming principle for vector-valued robust stochastic problems. As such, we will be interested in an appropriately defined `value function', for which, as it turns out, the relevant objects are  the image of the feasible set and the upper image, while the particular notion of the solutions of VOP is less important.

\section{Dynamic programming under a preorder}
\label{sec:dyn-prog-preorder}
In what follows, we make the following standing assumption on the preorder $\preceq$:
\begin{assumption}\label{assum:vsup}
For all $t \in \cT$ and all finite collections of random vectors $\{X^\theta\}_{\theta \in \Theta} \subseteq \sL_t(\bR^d)$, there exists a supremum $\vsup$ compatible with $\preceq^t$.   
\end{assumption}

Now, we can formulate time $t$ robust vector optimization problem
\begin{align}
\begin{split}
\label{prob_robust}
\textrm{minimize }& \quad \vsup\limits_{\theta \in \Theta}  \bE^\theta_t [\ell(S^\varphi_T)], \quad \textrm{ with respect to } \preceq^t \\
\textrm{subject to: }& \quad \varphi \in \mathfrak{A}^t(S_t),
\end{split}
\end{align}
where, we recall that $\mathfrak{A}^t(S_t)$ is the set of admissible controls $\varphi=(\varphi_t,\ldots, \varphi_{T-1})$ starting at time $t$, and given value $S_t$. 
Given the \textit{ideal point} (vector-valued) understanding of a supremum, problem~\eqref{prob_robust} is a well-defined (stochastic) vector optimization problem.

We are interested in dynamic programming principle and time consistency property of the family of these robust problems.
For this purpose, we will use the image of the feasible set as the set-valued candidate for the value function, 
\begin{align*}
\cV_t(S_t) &:=  \left\lbrace \vsup\limits_{\theta \in \Theta}  \bE^\theta_t [\ell(S_T)] \ : \ \varphi \in \mathfrak{A}^t(S_t), S_{s+1} = F(s, S_s, \varphi_s, Z_{s+1}), s = t, \dots, T-1  \right\rbrace   \\
& =  \bigcup\limits_{\varphi \in \mathfrak{A}^t(S_t)} \vsup\limits_{\theta \in \Theta}  \bE^\theta_t [\ell(S^\varphi_T)], \quad t=0,1,\ldots, T-1, 
\end{align*}
and we put $\cV_T(S_T)= \ell(S_T)$. Note that the value function consists of suprema over all time $t$ feasible strategies.
We recall that if $\vsup$ is not unique, we take one of them. We will show that our results do not depend on the choice of the $\vsup$. 

\begin{remark}
\label{rem:upper-im}
 Dynamic programming in the context of non-standard problems (without model uncertainty)  has been explored in the literature   for  multi-objective stochastic control problems in~\cite{KovacovaRudloff2019}, Nash-equilibria of non-zero sum games in~\cite{FeinsteinEtAl2022} or computation of multivariate risk measures in~\cite{FeinsteinRudloff2017}. In these works the version of the Bellman's principle is based on a set-valued notion of a value function -- such as the image of the feasible set $\cV_t (\cdot)$ defined herein. An alternative to this value function is (a set-valued value function attaining as values) the upper image of the multi-objective problem. We refer to~\cite[Section 5.1]{KovacovaRudloff2019} for an interpretation of an upper image as a (set-optimization) infimum. We also discuss the upper image approach for a particular case of component-wise partial order; see Section~\ref{sec:component-wise}. 
\end{remark}

\subsection{Weak Bellman's principle}
In parallel to the scalar case,  we aim to derive Bellman-type backward recursions for the value function $\cV$. For this purpose, we introduce a one-step recursively constructed sets

$$
\cR_t (S_t, \cV_{t+1}) := \left\lbrace {\vsup\limits_{\theta \in \Theta}}^t  \bE^\theta_t [X] \quad \vert \quad \varphi_t \in \mathfrak{A}_t(S_t), X \in \cV_{t+1}( F(t,S_t,\varphi_t, Z_{t+1})) \right\rbrace.
$$
where $\mathfrak{A}_t(S_t)\subset A$ is the set of admissible one step controls at time $t$ given $S_t$.

Note that, in the spirit of dynamic programming, $\cR_t(\cdot, \cV_{t+1})$ can be viewed as the value function of a one-step vector optimization problem
	\begin{align}
		\begin{split}
			\label{prob:rec1}
			\textrm{minimize }& \quad {\vsup\limits_{\theta \in \Theta}}^t  \bE^\theta_t [X], \quad \textrm{ with respect to } \preceq^t \\
			\textrm{subject to: }& \quad \varphi_t \in \mathfrak{A}_t(S_t), \; X \in \cV_{t+1}( F(t,S_t,\varphi_t, Z_{t+1})).
		\end{split}
	\end{align}
However, a closer inspection of $\cR_t$ shows that the one-step problem assumes knowledge of value function $\cV_{t+1}$, and it is not truly (backward) recursive. The following theorem illustrates order relation(s) between the value function $\cV$ and the one-step recursive $\cR$. This is the first step towards a Bellman-type relation(s) for the robust problem.

\begin{theorem}\label{th:Bellman1}
For every time $t = 0, 1, \dots, T-1$ 
we have
\begin{align}\label{eq:valueF1}
\cV_t(S_t) \preccurlyeq^t \cR_t (S_t, \cV_{t+1}), \ \text{i.e. } \cR_t (S_t, \cV_{t+1}) \subseteq \cV_t(S_t) + \sL_t(C), 
\end{align}
as well as
\begin{align}\label{eq:valueF2}
\cV_t(S_t) \curlyeqprec^t \cR_t (S_t, \cV_{t+1}), \ \text{i.e. }  \cV_t(S_t) \subseteq \cR_t (S_t, \cV_{t+1}) - \sL_t(C).
\end{align}
\end{theorem}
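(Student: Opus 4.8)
The plan is to prove both inclusions element-wise, in each case pairing a point of one set with a point of the other built from the same controlled trajectory, and then comparing the two nested suprema. The whole argument rests on ingredients that are already available: the tower property $\bE^\theta_t = \bE^\theta_t \circ \bE^\theta_{t+1}$ together with Lemma~\ref{lemma:order_exp} (conditional expectation respects $\preceq$), the monotonicity of $\vsup$ from Lemma~\ref{lemma:monotone}, and the defining upper-bound/least-upper-bound properties (i)--(ii) of Definition~\ref{def:supremum0}. The structural bookkeeping I will use is that a full control $\varphi \in \mathfrak{A}^t(S_t)$ splits as $\varphi = (\varphi_t, \tilde\varphi)$ with $\varphi_t \in \mathfrak{A}_t(S_t)$ and tail $\tilde\varphi \in \mathfrak{A}^{t+1}(S_{t+1})$, where $S_{t+1} = F(t,S_t,\varphi_t,Z_{t+1})$, and conversely that any such pair glues back to an admissible full control; writing $Y := \ell(S^\varphi_T)$, the random vector $X := {\vsup\limits_{\theta\in\Theta}}^{t+1} \bE^\theta_{t+1}[Y] \in \cV_{t+1}(S_{t+1})$ is the object linking the two time levels.

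For \eqref{eq:valueF2} I would take an arbitrary $v = {\vsup\limits_{\theta\in\Theta}}^t \bE^\theta_t[Y] \in \cV_t(S_t)$, split its control as above, and set $R := {\vsup\limits_{\theta\in\Theta}}^t \bE^\theta_t[X] \in \cR_t(S_t,\cV_{t+1})$. Property (i) gives $\bE^\theta_{t+1}[Y] \preceq^{t+1} X$ for every $\theta$; applying $\bE^\theta_t$ and the tower property through Lemma~\ref{lemma:order_exp} yields $\bE^\theta_t[Y] \preceq^t \bE^\theta_t[X]$ for every $\theta$, whence Lemma~\ref{lemma:monotone} gives $v \preceq^t R$. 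This means $v \in R - \sL_t(C) \subseteq \cR_t(S_t,\cV_{t+1}) - \sL_t(C)$, which is exactly \eqref{eq:valueF2}.

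For \eqref{eq:valueF1} I would run the construction in reverse: take an arbitrary $R = {\vsup\limits_{\theta\in\Theta}}^t \bE^\theta_t[X] \in \cR_t(S_t,\cV_{t+1})$, unpack $X = {\vsup\limits_{\theta\in\Theta}}^{t+1}\bE^\theta_{t+1}[Y]$ with $Y = \ell(S^{\tilde\varphi}_T)$ for some tail $\tilde\varphi$, and glue $\varphi := (\varphi_t,\tilde\varphi)$ to obtain $v := {\vsup\limits_{\theta\in\Theta}}^t \bE^\theta_t[Y] \in \cV_t(S_t)$. As before $\bE^\theta_t[Y] \preceq^t \bE^\theta_t[X]$ for each $\theta$, and property (i) of the supremum $R$ gives $\bE^\theta_t[X] \preceq^t R$, so by transitivity $\bE^\theta_t[Y] \preceq^t R$ for all $\theta$; thus $R$ is an upper bound of $\{\bE^\theta_t[Y]\}_\theta$ and the least-upper-bound property (ii) forces $v \preceq^t R$. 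Hence $R \in v + \sL_t(C) \subseteq \cV_t(S_t) + \sL_t(C)$, giving \eqref{eq:valueF1}.

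I expect the only genuinely delicate point to be the control bookkeeping rather than the order-theoretic estimates. Specifically, I must make sure that splitting and gluing of admissible controls are legitimate---that the tail of an admissible strategy is itself admissible from the (random) intermediate state $S_{t+1}$, and that concatenating a one-step control with an admissible tail is again admissible and suitably adapted---and that the membership $X \in \cV_{t+1}(S_{t+1})$ genuinely furnishes an admissible $\tilde\varphi$ (a measurable-selection subtlety hidden in evaluating the set-valued $\cV_{t+1}$ at the random state $S_{t+1}$). These are exactly the flow-consistency properties of $\mathfrak{A}^t(\cdot)$ that make the weak principle hold with no extra hypotheses; once they are granted, the identification $\ell(S^\varphi_T) = \ell(S^{\tilde\varphi}_T)$ along the spliced trajectory, together with the two short order computations above, completes the proof.
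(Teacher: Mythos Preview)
Your proposal is correct and follows essentially the same approach as the paper: for each inclusion you pair an element of one set with the element of the other coming from the same (split/glued) control, then use Definition~\ref{def:supremum0}(i), the tower property with Lemma~\ref{lemma:order_exp}, and monotonicity of $\vsup$ to obtain $v \preceq^t R$. The only cosmetic difference is that for \eqref{eq:valueF1} you invoke property~(ii) of $\vsup$ directly to conclude $v \preceq^t R$, whereas the paper cites Lemma~\ref{lemma:monotone}; since Lemma~\ref{lemma:monotone} is itself proved via property~(ii), the two arguments are the same.
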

\begin{proof}
We start by noting that 
\begin{equation}\label{eq:th22-1}
\cV_t(S_t) + \sL_t(C) = \bigcup\limits_{\varphi \in \mathfrak{A}^t(S_t)} \vsup\limits_{\theta \in \Theta}  \bE^\theta_t [\ell(S^\varphi_T)] + \sL_t(C) =
\bigcup\limits_{\varphi \in \mathfrak{A}^t(S_t)} \left(\Vsup\limits_{\theta \in \Theta}  \bE^\theta_t [\ell(S^\varphi_T)] +  \sL_t(C)\right),
\end{equation}	
and in view of Lemma \ref{lemma:invariantSup}(b), the right hand side does not depend on choice of $\vsup$.
	
To prove \eqref{eq:valueF1}, take arbitrary $\varphi_t \in \mathfrak{A}_t(S_t)$ and $X \in \cV_{t+1}(S_{t+1}^{\varphi_t})$, where $S_{t+1}^{\varphi_t} = F(t, S_t, \varphi_t, Z_{t+1})$. For $X \in \cV_{t+1}(S^{\varphi_t}_{t+1})$ there must exists $\varphi^X \in \mathfrak{A}^{t+1}(S_{t+1})$. 
Thus, $\bar{\varphi} := (\varphi_t, \varphi^X_{t+1}, \ldots, \varphi^X_{T-1}) \in \mathfrak{A}^t(S_t)$ generates the terminal state $S^{\bar{\varphi}}_T = S^{\varphi^X}_T$. By definition of supremum, since $X\in{\Vsup\limits_{\theta \in \Theta}}^{t+1}  \bE^\theta_{t+1} [\ell(S^{\bar{\varphi}}_T)]$,  we have
\begin{align*}
\bE^\theta_{t+1} \left[ \ell (S^{\bar{\varphi}}_T) \right] \preceq^{t+1} 
=  X, \quad \forall \theta \in \Theta.
\end{align*}
By Lemma~\ref{lemma:order_exp} we also have
\begin{align*}
\bE^\theta_t \left[  \ell (S^{\bar{\varphi}}_T)  \right] = \bE^\theta_t \left[ \bE^\theta_{t+1} \left[ \ell (S^{\bar{\varphi}}_T) \right] \right] \preceq^t \bE^\theta_t \left[ X \right], \quad \forall \theta \in \Theta.
\end{align*}
By monotonicity of the supremum operator, Lemma~\ref{lemma:monotone} we have
\begin{align*}
\vsup\limits_{\theta \in \Theta} \bE^\theta_t \left[  \ell (S^{\bar{\varphi}}_T)  \right] \preceq^t \vsup\limits_{\theta \in \Theta} \bE^\theta_t \left[ X \right].
\end{align*}
Thus,  using Lemma \ref{lemma:invariantSup} and \eqref{eq:th22-1}, we obtain
\[
\vsup\limits_{\theta \in \Theta} \bE^\theta_t \left[ X \right] \in 
\vsup\limits_{\theta \in \Theta} \bE^\theta_t \left[  \ell (S^{\bar{\varphi}}_T)  \right] 
+ \sL_t(C) \subseteq \bigcup\limits_{\varphi \in \mathfrak{A}^t(S_t)} \left(\vsup\limits_{\theta \in \Theta}  \bE^\theta_t [\ell(S^\varphi_T)] + \sL_t(C)\right) = \cV_t(S_t) + \sL_t(C).  
\]
Hence,  \eqref{eq:valueF1} is proved. 

To prove \eqref{eq:valueF2}, let $V\in \cV_t(S_t)$. Then, $V={\vsup\limits_{\theta\in \Theta}}^t \bE_t^\theta[\ell (S_T^\varphi)]$, for some $\varphi = (\varphi_t,\tilde\varphi)\in \mathfrak{A}^t(S_t)$, where $\tilde\varphi = (\varphi_{t+1}, \ldots, \varphi_{T-1})\in\mathfrak{A}^{t+1} (S_{t+1}^{\varphi_t})$, with assumed notation $S_{t+1}^{\varphi_t}:=F(t,S_t,\varphi_t,Z_{t+1})$.  Note that $S_T^{\varphi} = S_T^{\tilde\varphi}$

By definition of supremum, it holds
\begin{align*}
\bE^\theta_{t+1} [\ell(S^\varphi_T)] \preceq^{t+1} {\vsup\limits_{\theta \in \Theta}}^{t+1} \bE^\theta_{t+1} [\ell (S^{\tilde\varphi}_T)], \quad \forall \theta \in \Theta.
\end{align*}
By Lemma~\ref{lemma:order_exp},  it follows that
\begin{align*}
\bE^\theta_t \left[ \ell (S^\varphi_T) \right] = \bE^\theta_t \left[ \bE^\theta_{t+1} [\ell(S^\varphi_T)] \right] 
\preceq^{t} \bE^\theta_t \left[{\vsup\limits_{\theta' \in \Theta}}^{t+1} \bE^{\theta'}_{t+1} [\ell(S^{\tilde\varphi}_T)] \right], \quad \forall \theta \in \Theta.
\end{align*}
By monotonicity of the supremum operator we have 
\begin{align}
V={\vsup\limits_{\theta \in \Theta}}^t \bE^\theta_t \left[ \ell(S^\varphi_T) \right] 
\preceq^{t} {\vsup\limits_{\theta \in \Theta}}^t \bE^\theta_t \left[ {\vsup\limits_{\theta' \in \Theta}}^{t+1} \bE^{\theta'}_{t+1} [\ell(S^{\tilde\varphi}_T)] \right]=: R. \label{eq:Rvalue}
\end{align}
By construction $R\in \cR_t(S_t,\cV_{t+1})$, and thus $V\in R- \sL_t(C)$. In view of Lemma~\ref{lemma:invariantSup}(b), the set $R- \sL_t(C)$ is the same regardless of the choice of $R$ as $\vsup^t$ in the right hand side of \eqref{eq:Rvalue}.
Therefore, $\cV_t(S_t) \subseteq \cR_t (S_t, \cV_{t+1}) - \sL_t(C)$, and the proof is complete. 
\end{proof}

We remark that the sets $\cR_t$ was built by a forward procedure, while deriving a dynamic programming principle we should aim for backward recursions. For this reason, we consider the sets
\begin{align*}
	\cB_{T} (S_{T}) &:= \ell(S_{T}), \\
	\cB_{t} (S_{t}) &:= \left\lbrace \vsup\limits_{\theta \in \Theta}  \bE^\theta_t [X] \mid  \varphi_t \in \mathfrak{A}_t(S_t), X \in \cB_{t+1}(F(t, S_t,\varphi_t, Z_{t+1})) \right\rbrace,  \ \text{ for } t = T-1, \dots, 0.
\end{align*}
In this case, $\cB_t(\cdot)$ also corresponds to value function of a one step optimization problem
	\begin{align}
		\begin{split}
			\label{prob:rec2}
			\textrm{minimize }& \quad {\vsup\limits_{\theta \in \Theta}}^t  \bE^\theta_t [X], \quad \textrm{ with respect to } \preceq^t \\
			\textrm{subject to: }& \quad \varphi_t \in \mathfrak{A}_t(S_t), \; X \in \cB_{t+1}(F(t, S_t,\varphi_t, Z_{t+1})).
		\end{split}
	\end{align}
Note that in contrast to one-step problems~\eqref{prob:rec1},  problems~\eqref{prob:rec2} are genuinely backward recursive in the
sense of dynamic programming paradigm.

The question whether a Bellman-type relation(s) hold for the robust problem then is not a question of whether order relation(s) hold between $\cV$ and $\cR$, but whether order relation(s) hold between $\cV$ and $\cB$. The following results shows that such Bellman-type relation(s) hold true for the robust problem.

\begin{theorem}\label{th:Bellman0-1}
For every time $t = 0, 1, \dots, T-1$ and every state $S_t$  we have
\begin{align} \label{eq:proofVB1} 
\cV_t(S_t) \preccurlyeq^t \cB_t (S_t), \ \textrm{i.e} \    \ \cB_t (S_t) \subseteq \cV_t(S_t) + \sL_t(C),
\end{align}
as well as
\begin{align} \label{eq:proofVB2}
\cV_t(S_t) \curlyeqprec^t \cB_t (S_t), \ \textrm{i.e.} \   \ \cV_t(S_t) \subseteq \cB_t (S_t) - \sL_t(C).
\end{align}
\end{theorem}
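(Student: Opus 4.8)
The plan is to prove both inclusions \eqref{eq:proofVB1} and \eqref{eq:proofVB2} simultaneously by backward induction on $t$, using Theorem~\ref{th:Bellman1} as the engine. The crucial observation is that the backward sets satisfy $\cB_t(S_t) = \cR_t(S_t, \cB_{t+1})$ by their very definition; that is, $\cB$ is produced by iterating the \emph{same} one-step operator $\cR_t(S_t, \cdot)$ that appears in Theorem~\ref{th:Bellman1}, but fed with $\cB_{t+1}$ in place of the true value function $\cV_{t+1}$. Thus the task reduces to comparing the output of this operator on two different inputs. The base case $t = T$ is immediate, since $\cV_T(S_T) = \cB_T(S_T) = \ell(S_T)$, so both relations hold trivially (indeed with equality).

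For the inductive step, I would assume \eqref{eq:proofVB1} and \eqref{eq:proofVB2} hold at time $t+1$ for every state, i.e.\ $\cV_{t+1}(S_{t+1}) \preccurlyeq^{t+1} \cB_{t+1}(S_{t+1})$ and $\cV_{t+1}(S_{t+1}) \curlyeqprec^{t+1} \cB_{t+1}(S_{t+1})$. The argument then rests on two ingredients. First, Theorem~\ref{th:Bellman1} already relates $\cV_t(S_t)$ to $\cR_t(S_t, \cV_{t+1})$, giving $\cV_t(S_t) \preccurlyeq^t \cR_t(S_t, \cV_{t+1})$ and $\cV_t(S_t) \curlyeqprec^t \cR_t(S_t, \cV_{t+1})$. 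Second, I would establish a monotonicity property of the one-step operator: if $\cV_{t+1} \preccurlyeq^{t+1} \cB_{t+1}$ (resp.\ $\curlyeqprec^{t+1}$) at every reachable state, then $\cR_t(S_t, \cV_{t+1}) \preccurlyeq^t \cR_t(S_t, \cB_{t+1})$ (resp.\ $\curlyeqprec^t$). Granting this, I chain the two relations through the intermediate set $\cR_t(S_t, \cV_{t+1})$ and invoke transitivity of the set relations $\preccurlyeq^t$ and $\curlyeqprec^t$ (which follows from $\sL_t(C) + \sL_t(C) = \sL_t(C)$, as $C$ is a convex cone containing the origin) to conclude $\cV_t(S_t) \preccurlyeq^t \cB_t(S_t)$ and $\cV_t(S_t) \curlyeqprec^t \cB_t(S_t)$, recalling $\cB_t(S_t) = \cR_t(S_t, \cB_{t+1})$.

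To prove the monotonicity property in the $\preccurlyeq$ direction, I would take an arbitrary generator ${\vsup_{\theta\in\Theta}}^t \bE^\theta_t[X]$ of $\cR_t(S_t, \cB_{t+1})$ with $\varphi_t \in \mathfrak{A}_t(S_t)$ and $X \in \cB_{t+1}(F(t, S_t, \varphi_t, Z_{t+1}))$. The inductive hypothesis $\cV_{t+1} \preccurlyeq^{t+1} \cB_{t+1}$ means $\cB_{t+1} \subseteq \cV_{t+1} + \sL_{t+1}(C)$, so there exists $X' \in \cV_{t+1}(F(t, S_t, \varphi_t, Z_{t+1}))$ with $X' \preceq^{t+1} X$. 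Applying Lemma~\ref{lemma:order_exp} gives $\bE^\theta_t[X'] \preceq^t \bE^\theta_t[X]$ for every $\theta$, and monotonicity of the supremum (Lemma~\ref{lemma:monotone}) yields ${\vsup_{\theta\in\Theta}}^t \bE^\theta_t[X'] \preceq^t {\vsup_{\theta\in\Theta}}^t \bE^\theta_t[X]$. Since ${\vsup_{\theta\in\Theta}}^t \bE^\theta_t[X'] \in \cR_t(S_t, \cV_{t+1})$, the element ${\vsup_{\theta\in\Theta}}^t \bE^\theta_t[X]$ lies in $\cR_t(S_t, \cV_{t+1}) + \sL_t(C)$, establishing $\cR_t(S_t, \cB_{t+1}) \subseteq \cR_t(S_t, \cV_{t+1}) + \sL_t(C)$, i.e.\ $\cR_t(S_t, \cV_{t+1}) \preccurlyeq^t \cR_t(S_t, \cB_{t+1})$. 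The $\curlyeqprec$ direction is entirely analogous: one instead selects a dominating $X'' \in \cB_{t+1}(\cdot)$ with $X \preceq^{t+1} X''$ and pushes it forward in the same way. As in the proof of Theorem~\ref{th:Bellman1}, Lemma~\ref{lemma:invariantSup}(b) guarantees that these inclusions are insensitive to the choice of representative $\vsup$.

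The main obstacle is the measurable-selection step: the vector $X'$ (resp.\ $X''$) must be chosen as an $\sF_{t+1}$-measurable random vector over the realizations of the reachable state $F(t, S_t, \varphi_t, Z_{t+1})$, so that $\bE^\theta_t[X']$ is well defined and the resulting supremum genuinely belongs to the appropriate one-step set; this is where the $\bP$-a.s.\ conditional reading of the set relations $\preccurlyeq^{t+1}$ and $\curlyeqprec^{t+1}$ must be handled with care. Once such a selection is available, the remaining order-theoretic manipulations are routine consequences of Lemmas~\ref{lemma:order_exp}, \ref{lemma:monotone}, and \ref{lemma:invariantSup}, and the backward induction closes.
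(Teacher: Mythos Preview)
Your proposal is correct and follows essentially the same approach as the paper: backward induction with base case $\cV_T=\cB_T=\ell(S_T)$, then at the inductive step one uses the hypothesis to find a comparable element in $\cV_{t+1}$ (resp.\ $\cB_{t+1}$), pushes the comparison through $\bE^\theta_t$ and $\vsup$ via Lemmas~\ref{lemma:order_exp} and~\ref{lemma:monotone}, and closes with Theorem~\ref{th:Bellman1}. Your packaging is slightly more modular---you isolate the monotonicity of $\cR_t(S_t,\cdot)$ in the set orders as a separate sublemma and then chain it with Theorem~\ref{th:Bellman1} via transitivity of $\preccurlyeq^t,\curlyeqprec^t$---whereas the paper runs the same steps inline and treats the two inclusions in separate inductions, but the underlying argument is identical; the measurable-selection caveat you flag is handled in the paper at the same level of informality.
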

\begin{proof}
We will prove the result by backward induction. 
By definitions  $\cV_T(S_T) = \ell(S_T) = \cB_T(S_T)$.
Next, we will prove \eqref{eq:proofVB1}. 
As an induction hypothesis assume that the relation $\cB_{t+1} (S_{t+1}) \subseteq \cV_{t+1}(S_{t+1}) + \sL_{t+1}(C)$, for any state $S_{t+1}$. 
Let $B\in  \cB_t (S_t)$. Then, $B\in {\Vsup\limits_{\theta \in \Theta}}^t \, \bE^\theta_t [X] $, for some $X\in\cB_{t+1}(S_{t+1}^{\varphi_t})$, $\varphi_t\in \mathfrak{A}_t(S_t)$. 

According to the induction hypothesis there exists $V \in \cV_{t+1}(S_{t+1}^{\varphi_t})$ such that
$V \preceq^{t+1} X$. Then, by Assumption~\ref{assum:vsup} and Lemmas~\ref{lemma:order_exp}, \ref{lemma:monotone} and \ref{lemma:invariantSup}(a) it follows
\begin{align*}
	{\vsup\limits_{\theta \in \Theta}}^{t} \bE^\theta_t [V] \preceq^t {\vsup\limits_{\theta \in \Theta}}^{t} \bE^\theta_t [X] \preceq^t B 
	\end{align*}
and thus 
\[
 B\in 	{\vsup\limits_{\theta \in \Theta}}^{t} \bE^\theta_t [V] +\sL_t(C). 
\]
Note that ${\vsup\limits_{\theta \in \Theta}}^{t} \bE^\theta_t [V]  +\sL_t(C) \subseteq \cR_t (S_t, \cV_{t+1})  +\sL_t(C)$, and by Theorem~\ref{th:Bellman1} 
${\vsup\limits_{\theta \in \Theta}}^{t} \bE^\theta_t [V] \in \cV_t(S_t)+\sL_t(C)$, which combined with the above inclusion yields \eqref{eq:proofVB1}.

Let us next prove \eqref{eq:proofVB2}. As an induction hypothesis assume that the $\cV_{t+1} (S_{t+1}) \subseteq \cB_{t+1}(S_{t+1}) - \sL_{t+1}(C)$, for any state $S_{t+1}$. Consider arbitrary $V \in \cV_t(S_t)$. 

Recall that by Theorem~\ref{th:Bellman1}, $\cV_t(S_t) \curlyeqprec^t \cR_t (S_t, \cV_{t+1})$. Therefore, for this given $V \in \cV_t(S_t)$, there exists $\varphi_t \in \mathfrak{A}_t(S_t)$ and $X \in \cV_{t+1}(S_{t+1}^{\varphi_t})$ such that
\begin{align}\label{eq:proofVB22}
V \preceq^t {\vsup\limits_{\theta \in \Theta}}^{t} \bE^\theta_t [X].
\end{align}
On the other hand, since $X \in \cV_{t+1}(S_{t+1}^{\varphi_t})$, by induction hypothesis it holds $\cV_{t+1}(S_{t+1}^{\varphi_t}) \curlyeqprec^{t+1} \cB_{t+1} (S_{t+1}^{\varphi_t})$, therefore there exists $B \in \cB_{t+1} (S_{t+1}^{\varphi_t})$ such that
\[
X \preceq^{t+1} B.
\]
From Assumption~\ref{assum:vsup} and Lemmas~\ref{lemma:order_exp} and~\ref{lemma:monotone} it follows
\begin{align}\label{eq:proofVB33}
{\vsup\limits_{\theta \in \Theta}}^{t} \bE^\theta_t [X] \preceq^{t} {\vsup\limits_{\theta \in \Theta}}^{t} \bE^\theta_t [B],
\end{align}
By definition of set $\cB_t(S_t)$, and Lemma~\ref{lemma:invariantSup}(b),  ${\vsup\limits_{\theta \in \Theta}}^{t} \bE^\theta_t [B]  \in  \cB_t (S_t) - \sL_t(C)$. From here, by transitivity of $\preceq^t$ and \eqref{eq:proofVB22} and \eqref{eq:proofVB33}, we have that $V\in\cB_t (S_t) - \sL_t(C)$. This concludes the proof. 
\end{proof}

In the standard (single-objective) setting, the famous Bellman equation structurally corresponds to a recursive relation of value function of the problem. In the above theorems, we derived order relation(s) between the value function $\cV$ of the robust problem~\eqref{prob_robust} and the recursively constructed $\cB$, which corresponds to the value function of a one-time-step version of the robust problem. Therefore, the relation(s) proven in Theorem~\ref{th:Bellman0-1} can be interpreted as Bellman-type inclusion(s) for the robust problem, that is weaker forms of Bellman equations. A natural next question is to ask whether also reverse order relation(s) hold between $\cV$ and $\cB$.

\subsection{Strong Bellman's principle} 
Now we additionally assume that our family of models $\Theta$ has a \textit{rectangularity} with respect to the order relation (and the corresponding vector-valued supremum) considered here.
\begin{definition}
	\label{def:rectangle}
	We say that the family of models $\Theta$ is \textit{$\preceq$-rectangular} if for all times $t=0, 1, \dots, T-1$ and all random vectors $X \in \sL_T(\bR^d)$ it holds
	\begin{align}\label{eq:rect-weak}
		{\vsup\limits_{\theta \in \Theta}}^t  \bE^\theta_t  \left[ {\vsup\limits_{\theta \in \Theta}}^{t+1} \bE^\theta_{t+1} [X]  \right] 
		\preceq^t {\vsup\limits_{\theta \in \Theta}}^t \bE^\theta_t  [X].
	\end{align}
\end{definition}

\begin{remark}\label{rem:vsup-vsup} 
(i) Definition~\ref{def:rectangle} is well-posed, in the sense that \eqref{eq:rect-weak} does not depend on the choices of $\vsup$'s. 
Indeed, in view of Lemma~\ref{lemma:supt+1}, \eqref{eq:rect-weak} is invariant with respect to the choice of ${\vsup}^{t+1}$. On the other hand, by Lemma~\ref{lemma:invariantSup}(a) and transitivity of the $\preceq^t$, we have that \eqref{eq:rect-weak} does not depend on the choices of ${\vsup}^t$.

(ii) By transitivity and Lemma~\ref{lemma:monotone}, $\preceq$-rectangularity given by~\eqref{eq:rect-weak} also implies the following nested form of this property  
\begin{align*}
{\vsup\limits_{\theta \in \Theta}}^t  \bE^\theta_t  \left[ {\vsup\limits_{\theta \in \Theta}}^{t+1} \bE^\theta_{t+1} \left[ \dots {\vsup\limits_{\theta \in \Theta}}^{T-1} \bE^\theta_{T-1} [X] \dots \right]  \right] 
		\preceq^t {\vsup\limits_{\theta \in \Theta}}^t \bE^\theta_t  [X].
\end{align*}
A corresponding nested formulation was used in \cite{Shapiro2016} to define rectangularity property in the context of one-dimensional random variables and standard suprema, see next section.

(iii) For a general preorder $\preceq$, constructing a $\preceq$-rectangular family of models as well as verifying $\preceq$-rectangularity property remain challenging problems. In \cite{Shapiro2016}, the author studies these questions for one-dimensional random variables, and provides a recursive construction of rectangular sets of probability measures. In Section~\ref{sec:component-wise} we show that a similar constructions implies $\leq$-rectangularity in $\bR^d$, which allows to explore the component-wise setting further.

(iv) Using properties of suprema, we have  
	$$
	{\vsup\limits_{\theta \in \Theta}}^t \bE^\theta_t  [X] 
	\preceq^t {\vsup\limits_{\theta \in \Theta}}^t  \bE^\theta_t  \left[ {\vsup\limits_{\theta \in \Theta}}^{t+1} \bE^\theta_{t+1} [X] \right]. 
	$$ 
	Hence, if the preorder $\preceq$ is partial order (i.e. also antisymmetric), then the induced preorders $\preceq^t$ are also antisymmetric and thus partial orders. Therefore, in case of a partial order the $\preceq$-rectangularity property takes the form
	\begin{align}\label{eq:rect-strong}
		{\vsup\limits_{\theta \in \Theta}}^t \bE^\theta_t  [X]  = {\vsup\limits_{\theta \in \Theta}}^t  \bE^\theta_t  \left[ {\vsup\limits_{\theta \in \Theta}}^{t+1} \bE^\theta_{t+1} [X]  \right].
	\end{align}

\hfill $\square$
\end{remark}

In the following two theorems we derive order relations between the three value functions, $\cV, \cR$ and $\cB$,  assuming $\preceq$-rectangularity. These will strengthen the derived DPP for the robust problem. 

\begin{theorem}\label{th:Bellman2}
	Assume that the family of models $\Theta$ is $\preceq$-rectangular. Then for every time $t = 0, 1, \dots, T-1$ and every state $S_t$  we have
	\begin{align*}
		\cR_t (S_t, \cV_{t+1}) \preccurlyeq^t \cV_t(S_t), \text{ i.e. }\quad \cV_t(S_t) \subseteq \cR_t (S_t, \cV_{t+1}) + \sL_t(C)
	\end{align*}
	as well as
	\begin{align*}
		\cR_t (S_t, \cV_{t+1}) \curlyeqprec^t \cV_t(S_t), \text{ i.e. }\quad \cR_t (S_t, \cV_{t+1}) \subseteq \cV_t(S_t) - \sL_t(C).
	\end{align*}
\end{theorem}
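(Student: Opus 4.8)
The plan is to read off both inclusions directly from the rectangularity inequality~\eqref{eq:rect-weak}, which is precisely the order relation collapsing a nested supremum into a single one. The structural observation driving the proof is that $\cV_t(S_t)$ is assembled from \emph{single} suprema ${\vsup\limits_{\theta \in \Theta}}^t \bE^\theta_t[\ell(S^\varphi_T)]$ taken along a full strategy $\varphi$, whereas $\cR_t(S_t,\cV_{t+1})$ is assembled from \emph{nested} suprema ${\vsup\limits_{\theta \in \Theta}}^t \bE^\theta_t[X]$ with $X = {\vsup\limits_{\theta \in \Theta}}^{t+1} \bE^\theta_{t+1}[\ell(S^{\tilde\varphi}_T)] \in \cV_{t+1}$. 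Rectangularity says exactly that the nested object is $\preceq^t$ the single one, which is the direction opposite to the bounds obtained in Theorem~\ref{th:Bellman1}.

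For $\cV_t(S_t) \subseteq \cR_t(S_t,\cV_{t+1}) + \sL_t(C)$, I would take an arbitrary $V \in \cV_t(S_t)$, so that $V = {\vsup\limits_{\theta \in \Theta}}^t \bE^\theta_t[\ell(S^\varphi_T)]$ for some $\varphi = (\varphi_t, \tilde\varphi) \in \mathfrak{A}^t(S_t)$ with $\tilde\varphi \in \mathfrak{A}^{t+1}(S^{\varphi_t}_{t+1})$. Setting $X := {\vsup\limits_{\theta \in \Theta}}^{t+1} \bE^\theta_{t+1}[\ell(S^{\tilde\varphi}_T)]$, which lies in $\cV_{t+1}(S^{\varphi_t}_{t+1})$, the element $R := {\vsup\limits_{\theta \in \Theta}}^t \bE^\theta_t[X]$ belongs to $\cR_t(S_t,\cV_{t+1})$ by construction. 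Since $S^\varphi_T = S^{\tilde\varphi}_T$, applying~\eqref{eq:rect-weak} with the terminal vector $\ell(S^\varphi_T)$ gives $R \preceq^t V$, hence $V \in R + \sL_t(C) \subseteq \cR_t(S_t,\cV_{t+1}) + \sL_t(C)$.

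For $\cR_t(S_t,\cV_{t+1}) \subseteq \cV_t(S_t) - \sL_t(C)$, I would run the same correspondence in reverse: take $R \in \cR_t(S_t,\cV_{t+1})$, so $R = {\vsup\limits_{\theta \in \Theta}}^t \bE^\theta_t[X]$ with $\varphi_t \in \mathfrak{A}_t(S_t)$ and $X \in \cV_{t+1}(S^{\varphi_t}_{t+1})$; unfolding $X = {\vsup\limits_{\theta \in \Theta}}^{t+1} \bE^\theta_{t+1}[\ell(S^{\tilde\varphi}_T)]$ for some $\tilde\varphi \in \mathfrak{A}^{t+1}(S^{\varphi_t}_{t+1})$, the concatenation $\bar\varphi := (\varphi_t, \tilde\varphi) \in \mathfrak{A}^t(S_t)$ produces $V := {\vsup\limits_{\theta \in \Theta}}^t \bE^\theta_t[\ell(S^{\bar\varphi}_T)] \in \cV_t(S_t)$. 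Because $S^{\bar\varphi}_T = S^{\tilde\varphi}_T$, the inequality~\eqref{eq:rect-weak} again yields $R \preceq^t V$, i.e. $R \in V - \sL_t(C) \subseteq \cV_t(S_t) - \sL_t(C)$.

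Two points deserve care rather than being genuine obstacles. First, the bookkeeping of admissible controls---that every $\varphi \in \mathfrak{A}^t(S_t)$ decomposes into a first step $\varphi_t \in \mathfrak{A}_t(S_t)$ together with a continuation in $\mathfrak{A}^{t+1}(S^{\varphi_t}_{t+1})$, and that any such pair concatenates back into $\mathfrak{A}^t(S_t)$---is the same flow property already exploited in the proof of Theorem~\ref{th:Bellman1}, and I would invoke it identically. Second, I must confirm the argument is independent of which representatives are chosen for the non-unique $\vsup$; this follows from Lemma~\ref{lemma:invariantSup}(b) together with the well-posedness of rectangularity recorded in Remark~\ref{rem:vsup-vsup}(i), so that both the relation $R \preceq^t V$ and the sets $R + \sL_t(C)$ and $V - \sL_t(C)$ are unaffected by the choices. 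The only substantive input is rectangularity itself, with everything else reducing to the identification of the matching strategy on each side; accordingly I expect no serious difficulty beyond keeping the control decomposition and the choice-independence clean.
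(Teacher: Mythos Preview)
Your proposal is correct and follows essentially the same approach as the paper: for each inclusion you match a full strategy $\varphi$ with its decomposition $(\varphi_t,\tilde\varphi)$ (or conversely concatenate), apply the rectangularity inequality~\eqref{eq:rect-weak} to compare the nested supremum with the single one, and deduce the required $\preceq^t$ relation. The paper's proof is organized identically, and your remarks on the flow property of admissible controls and on choice-independence via Lemma~\ref{lemma:invariantSup} and Remark~\ref{rem:vsup-vsup}(i) mirror the care taken there.
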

\begin{proof}
	Let us start with proving $\cV_t(S_t) \subseteq \cR_t (S_t, \cV_{t+1}) + \sL_t(C)$.
	Take an element $V\in \cV_t(S_t)$, which corresponds to some feasible strategy $\varphi \in \mathfrak{A}^t(S_t)$. According to the $\preceq$-rectangularity it holds
	\begin{align}\label{eq:bellman10}
		{\vsup\limits_{\theta \in \Theta}}^t  \bE^\theta_t  \left[ {\vsup\limits_{\theta \in \Theta}}^{t+1} \bE^\theta_{t+1} [\ell(S^\varphi_T)]  \right] \preceq^t {\vsup\limits_{\theta \in \Theta}}^t \bE^\theta_t  [\ell(S^\varphi_T)]\preceq^t V. 
	\end{align}
According to Lemma~\ref{lemma:invariantSup} and \ref{lemma:supt+1},  ${\vsup\limits_{\theta \in \Theta}}^t  \bE^\theta_t  \left[ {\vsup\limits_{\theta \in \Theta}}^{t+1} \bE^\theta_{t+1} [\ell(S^\varphi_T)]  \right] \in \cR_t (S_t, \cV_{t+1}) + \sL_t(C)$.
Next, by \eqref{eq:bellman10} combined with convexity of the cone $\sL_t(C)$, we get 
$V\in\cR_t(S_t,\cV_{t+1}) + \sL_t(C)$,
that proves the claim.

Now let us prove $\cR_t (S_t, \cV_{t+1}) \subseteq \cV_t(S_t) - \sL_t(C)$.  For an arbitrary element $R\in \cR_t (S_t, \cV_{t+1})$, there exists some $\varphi_t \in \mathfrak{A}_t (S_t)$ and $X \in \cV_{t+1}(S_{t+1}^{\varphi_t})$, such that $R\in{\Vsup\limits_{\theta\in\Theta}}^t\bE^\theta_t[X]$, and were we recall that  $S_t^{\varphi_t}:=F(t,S_t,\varphi_t, Z_{t+1})$.

Since $X \in \cV_{t+1}(S_{t+1}^{\varphi_t})$,  there exists $\widetilde\varphi \in \mathfrak{A}^{t+1}(S_{t+1}^{\varphi_t})$ such that $X \in {\Vsup\limits_{\theta \in \Theta}}^{t+1}  \bE^\theta_{t+1} [\ell(S^{\widetilde\varphi}_T)]$. Thus $\bar{\varphi} := (\varphi_t, \widetilde \varphi_{t+1}, \dots, \widetilde \varphi_{T-1}) \in \mathfrak{A}^t (S_t)$ and  $S^{\bar{\varphi}}_T = S^{\widetilde\varphi}_T$. Therefore, by $\preceq$-rectangularity, Remark~\ref{rem:vsup-vsup}, and Lemma~\ref{lemma:invariantSup}(a), we have
	\begin{align}\label{eq:bellman20}
		R\preceq^t {\vsup\limits_{\theta \in \Theta}}^t  \bE^\theta_t  \left[ X  \right] = {\vsup\limits_{\theta \in \Theta}}^t  \bE^\theta_t  \left[ {\vsup\limits_{\theta \in \Theta}}^{t+1} \bE^\theta_{t+1} [\ell(S^{\bar{\varphi}}_T)]  \right] \preceq^t {\vsup\limits_{\theta \in \Theta}}^t \bE^\theta_t  [\ell(S^{\bar{\varphi}}_T)]. 
	\end{align}
	Note that by Lemma~\ref{lemma:invariantSup}(b) ${\vsup\limits_{\theta \in \Theta}}^t \bE^\theta_t  [\ell(S^{\bar{\varphi}}_T)] \in \cV_t(S_t) - \sL_t(C)$. Hence, by \eqref{eq:bellman20}, combined with convexity of $\sL_t(C)$, we proved that $R \in V - \sL_t(C)$. The proof is complete. 
\end{proof}

\begin{theorem}\label{th:Bellman3}
	Assume that the family of models $\Theta$ is $\preceq$-rectangular. Then for every time $t = 0, 1, \dots, T-1$ and every state $S_t$  we have
\begin{align*}
	\cB_t (S_t) \preccurlyeq^t \cV_t(S_t), \text{ i.e. }\quad \cV_t(S_t) \subseteq \cB_t (S_t) + \sL_t(C)
\end{align*}
as well as
\begin{align*}
	\cB_t (S_t) \curlyeqprec^t \cV_t(S_t), \text{ i.e. }\quad \cB_t (S_t) \subseteq \cV_t(S_t) - \sL_t(C).
\end{align*}
\end{theorem}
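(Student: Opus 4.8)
The plan is to argue by backward induction on $t$, using Theorem~\ref{th:Bellman2} as the bridge between $\cV_t(S_t)$ and the one-step set $\cR_t(S_t,\cV_{t+1})$, and the induction hypothesis as the bridge between $\cR_t(S_t,\cV_{t+1})$ and $\cB_t(S_t)$. The base case is immediate, since $\cV_T(S_T)=\ell(S_T)=\cB_T(S_T)$, so both set relations hold trivially at $t=T$. For the inductive step I would assume both claimed inclusions at time $t+1$ for every state, that is $\cV_{t+1}(S_{t+1})\subseteq\cB_{t+1}(S_{t+1})+\sL_{t+1}(C)$ and $\cB_{t+1}(S_{t+1})\subseteq\cV_{t+1}(S_{t+1})-\sL_{t+1}(C)$.

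For the first inclusion at time $t$, namely $\cV_t(S_t)\subseteq\cB_t(S_t)+\sL_t(C)$, I would first invoke Theorem~\ref{th:Bellman2} to obtain $\cV_t(S_t)\subseteq\cR_t(S_t,\cV_{t+1})+\sL_t(C)$, reducing the task to showing $\cR_t(S_t,\cV_{t+1})\subseteq\cB_t(S_t)+\sL_t(C)$. To this end, take $R\in\cR_t(S_t,\cV_{t+1})$, so $R={\vsup_{\theta\in\Theta}}^t\bE^\theta_t[X]$ for some $\varphi_t\in\mathfrak{A}_t(S_t)$ and $X\in\cV_{t+1}(S_{t+1}^{\varphi_t})$. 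The induction hypothesis supplies $B\in\cB_{t+1}(S_{t+1}^{\varphi_t})$ with $B\preceq^{t+1}X$; applying Lemma~\ref{lemma:order_exp} and the monotonicity Lemma~\ref{lemma:monotone} yields ${\vsup_{\theta\in\Theta}}^t\bE^\theta_t[B]\preceq^t{\vsup_{\theta\in\Theta}}^t\bE^\theta_t[X]=R$. Since ${\vsup_{\theta\in\Theta}}^t\bE^\theta_t[B]\in\cB_t(S_t)$ by the very definition of $\cB_t$ (the same $\varphi_t$, with admissible $B$), we conclude $R\in\cB_t(S_t)+\sL_t(C)$. As $\sL_t(C)$ is a convex cone, $\sL_t(C)+\sL_t(C)=\sL_t(C)$, so composing the two inclusions gives the claim. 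Throughout, well-posedness with respect to the non-unique choice of $\vsup$ is guaranteed by Lemma~\ref{lemma:invariantSup}(b), exactly as in the proofs of Theorems~\ref{th:Bellman0-1} and~\ref{th:Bellman2}.

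The second inclusion $\cB_t(S_t)\subseteq\cV_t(S_t)-\sL_t(C)$ is entirely symmetric, with the roles of $\cV_{t+1}$ and $\cB_{t+1}$ exchanged and the other half of the induction hypothesis used. Here I would use the companion relation $\cR_t(S_t,\cV_{t+1})\subseteq\cV_t(S_t)-\sL_t(C)$ from Theorem~\ref{th:Bellman2} and reduce to showing $\cB_t(S_t)\subseteq\cR_t(S_t,\cV_{t+1})-\sL_t(C)$: for $B={\vsup_{\theta\in\Theta}}^t\bE^\theta_t[Y]\in\cB_t(S_t)$ with $Y\in\cB_{t+1}(S_{t+1}^{\varphi_t})$, the induction hypothesis provides $X\in\cV_{t+1}(S_{t+1}^{\varphi_t})$ with $Y\preceq^{t+1}X$, whence ${\vsup_{\theta\in\Theta}}^t\bE^\theta_t[Y]\preceq^t{\vsup_{\theta\in\Theta}}^t\bE^\theta_t[X]\in\cR_t(S_t,\cV_{t+1})$ and thus $B\in\cR_t(S_t,\cV_{t+1})-\sL_t(C)$; the cone arithmetic then closes the argument.

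I expect the only substantive point to be the recognition that $\cB_t$ and $\cR_t(\cdot,\cV_{t+1})$ arise from the \emph{identical} one-step construction applied to $\cB_{t+1}$ and $\cV_{t+1}$ respectively, so that the time-$(t+1)$ order relation between these two value functions propagates one step forward through the monotone operator $X\mapsto{\vsup_{\theta\in\Theta}}^t\bE^\theta_t[X]$, monotonicity being precisely the content of Lemmas~\ref{lemma:order_exp} and~\ref{lemma:monotone}. Everything else is routine bookkeeping with the cone identity $\sL_t(C)+\sL_t(C)=\sL_t(C)$ and the invariance Lemma~\ref{lemma:invariantSup}; crucially, no fresh appeal to $\preceq$-rectangularity is required beyond what is already packaged into Theorem~\ref{th:Bellman2}.
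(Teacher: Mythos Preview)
Your argument is correct and takes a genuinely different route from the paper's own proof. The paper proceeds by backward induction but, at each inductive step, works directly with elements of $\cV_t(S_t)$ (respectively $\cB_t(S_t)$) and invokes $\preceq$-rectangularity explicitly to pass from $\vsup_\theta \bE^\theta_t\bigl[\vsup_{\theta'} \bE^{\theta'}_{t+1}[\ell(S_T^\varphi)]\bigr]$ down to $\vsup_\theta \bE^\theta_t[\ell(S_T^\varphi)]$; in other words, rectangularity is re-used inside the induction, not only through Theorem~\ref{th:Bellman2}. You instead factor the proof cleanly into two pieces: Theorem~\ref{th:Bellman2} absorbs the entire use of rectangularity by relating $\cV_t$ to $\cR_t(\cdot,\cV_{t+1})$, and the remaining step---that the one-step operator $X\mapsto {\vsup_\theta}^t\,\bE^\theta_t[X]$ carries the time-$(t{+}1)$ order relation between $\cV_{t+1}$ and $\cB_{t+1}$ to one between $\cR_t(\cdot,\cV_{t+1})$ and $\cB_t$---follows purely from Lemmas~\ref{lemma:order_exp} and~\ref{lemma:monotone}. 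The payoff of your decomposition is modularity: it makes explicit that Theorem~\ref{th:Bellman3} is a formal consequence of Theorem~\ref{th:Bellman2} plus monotonicity of the one-step map, with no further structural input; the paper's approach is slightly more self-contained but at the cost of re-deriving the rectangularity estimate inside the induction.
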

\begin{proof}
We again prove this by a backward recursion. Recall that at time $T$ it holds $\cB_{T} (S_{T}) = \cV_{T} (S_{T})$ by definition. 	

Let us prove $\cV_t(S_t) \subseteq \cB_t(S_t) + \sL_t(C)$. As an induction hypothesis assume $\cV_{t+1}(S_{t+1}) \subseteq \cB_{t+1} (S_{t+1}) + \sL_{t+1}(C)$ holds across all state $S_{t+1}$. Take an element $V\in\cV_t(S_t)$. Then there exists $\varphi=(\varphi_t,\widetilde{\varphi})\in\mathfrak{A}^t(S_t)$, such that $V\in{\Vsup\limits_{\theta \in \Theta}}^t\bE^\theta_t[\ell(S_T^\varphi)]$. 
Hence, $\vsup\limits_{\theta \in \Theta} \bE^\theta_{t+1}  [\ell(S^{\widetilde\varphi}_T)] \in \cV_{t+1} (S^{\varphi_t}_{t+1})+\sL_t(C)$, and by induction hypothesis and convexity of the cone $\sL_t(C)$, we obtain 
$\vsup\limits_{\theta \in \Theta} \bE^\theta_{t+1}  [\ell(S^{\widetilde\varphi}_T)] \in \cB_{t+1} (S^{\varphi_t}_{t+1})+\sL_t(C)$. Consequently, there exists $B \in \cB_{t+1} (S^{\varphi_t}_{t+1})$ such that
	\begin{align*}
		B \preceq^{t+1} \vsup\limits_{\theta \in \Theta} \bE^\theta_{t+1}  [\ell(S^\varphi_T)],
	\end{align*}
	where we used the fact that $S_T^{\varphi} = S_T^{\widetilde{\varphi}}$. By Assumption~\ref{assum:vsup}, Lemmas~\ref{lemma:order_exp}, \ref{lemma:monotone}, and~\ref{lemma:invariantSup}(a) and $\preceq$-rectangularity, 
we have
\[
		\vsup\limits_{\theta \in \Theta} \bE^\theta_t \left[ B \right] \preceq^{t} \vsup\limits_{\theta \in \Theta} \bE^\theta_t \left[  \vsup\limits_{\theta \in \Theta} \bE^\theta_{t+1}  [\ell(S^\varphi_T)] \right] 
		\preceq^{t} \vsup\limits_{\theta \in \Theta} \bE^\theta_t \left[ \ell(S^\varphi_T) \right] \preceq^t V.
\]
From here, by  Lemma~\ref{lemma:invariantSup}(b) $\vsup\limits_{\theta \in \Theta} \bE^\theta_t \left[ B \right] \in \cB_t(S_t) +\sL_t(C)$, and by  transitivity of $\preceq^t$, the claim follows.

Next, let us  prove the second inclusion $\cB_t(S_t) \subseteq \cV_t(S_t) - \sL_t(C)$. As an induction hypothesis assume $\cB_{t+1} (S_{t+1}) \subseteq \cV_{t+1}(S_{t+1}) -\sL_{t+1}(C)$ holds across all state $S_{t+1}$. Take an element $B\in\cB_t(S_t)$, that corresponds to some $\varphi_t \in \mathfrak{A}_t(S_t)$ and $X \in \cB_{t+1}(S^{\varphi_t}_{t+1})$, and by the induction hypothesis, $X\in \cV_{t+1}(S^{\varphi_t}_{t+1}) -\sL_{t+1}(C)$. Then, there exists a strategy $\widetilde\varphi\in\mathfrak{A}^{t+1}(S_{t+1}^{\varphi_t})$ such that 
\begin{align*}
	X \preceq^{t+1} \vsup\limits_{\theta \in \Theta} \bE^\theta_{t+1}  [\ell(S^{{\widetilde\varphi}}_T)].
\end{align*}
Next, we consider the combined strategy $\varphi = (\varphi_t, \widetilde{\varphi})$, and note that $\S_T^{\varphi}=S_T^{\widetilde{\varphi}}$. 
By Lemmas~\ref{lemma:order_exp}, \ref{lemma:monotone}, and~\ref{lemma:invariantSup}(a) and $\preceq$-rectangularity, we deduce 
	\begin{align*}
B\preceq^t		\vsup\limits_{\theta \in \Theta}  \bE^\theta_t [X] \preceq^{t} \vsup\limits_{\theta \in \Theta}  \bE^\theta_t \left[ \vsup\limits_{\theta \in \Theta} \bE^\theta_{t+1}  [\ell(S^{{\varphi}}_T)] \right] \preceq^{t} \vsup\limits_{\theta \in \Theta}  \bE^\theta_t \left[ \ell(S^{{\varphi}}_T) \right].
	\end{align*}
From here, by Lemma~\ref{lemma:invariantSup}(b) $\vsup\limits_{\theta \in \Theta}  \bE^\theta_t \left[ \ell(S^{{\varphi}}_T) \right] \in \cV_t(S_t)- \sL_t(C)$, and by transitivity of $\preceq^t$ the proof is complete. 
\end{proof}

Theorem~\ref{th:Bellman0-1} provided a weaker form of Bellman equation for the robust problem, which did not require further assumptions. Under $\preceq$-rectangularity, Theorems~\ref{th:Bellman0-1} and~\ref{th:Bellman3} jointly provide order relations on the value function $\cV$ and the recursive value function $\cB$ that can be interpreted as a stronger form of Bellman equations for the robust problem.

We conclude this section with a stronger version of Bellman equations, assuming that $\preceq$ is a partial order and the family of models $\Theta$ satisfies the $\preceq$-rectangularity property.

\begin{theorem}\label{th:Bellman4}
	Assume that the preorder $\preceq$ is antisymmetic and the family of models $\Theta$ has the $\preceq$-rectangularity property. Then, 
	\begin{align*}
		\cV_t(S_t) = \cR_t (S_t, \cV_{t+1}) = \cB_t (S_t),
	\end{align*}
	 for every time $t = 0, 1, \dots, T-1$ and every state $S_t$. 
\end{theorem}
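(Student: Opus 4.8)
The plan is to bypass the set-order relations of Theorems~\ref{th:Bellman0-1}, \ref{th:Bellman2} and~\ref{th:Bellman3} and instead prove the stated equalities \emph{element-wise}. The reason is that those theorems, taken together, only yield $\cV_t(S_t) + \sL_t(C) = \cR_t(S_t,\cV_{t+1}) + \sL_t(C) = \cB_t(S_t) + \sL_t(C)$ together with the analogous identities involving $-\sL_t(C)$; since the value functions here are genuine collections of points rather than lower or upper sets, these cone-shifted identities do \emph{not} force equality of the sets themselves. The two ingredients that make the element-wise argument work are, first, that antisymmetry of $\preceq$ upgrades the $\preceq$-rectangularity hypothesis to the \emph{equality} form \eqref{eq:rect-strong} (Remark~\ref{rem:vsup-vsup}(iv)), and second, that in the partial-order case every $\vsup$ is unique by Lemma~\ref{lemma:invariantSup}(c), so that each object $\vsup_\theta^t \bE^\theta_t[\cdot]$ is an unambiguously defined random vector. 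This lets a strategy that witnesses a point of one set be transported to witness the \emph{same} point of another.

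First I would prove $\cV_t(S_t) = \cR_t(S_t, \cV_{t+1})$ directly. For $\cV_t(S_t) \subseteq \cR_t(S_t,\cV_{t+1})$, take $V \in \cV_t(S_t)$, so $V = \vsup_\theta^t \bE^\theta_t[\ell(S^\varphi_T)]$ for some $\varphi = (\varphi_t, \widetilde\varphi) \in \mathfrak{A}^t(S_t)$ with $\widetilde\varphi \in \mathfrak{A}^{t+1}(S^{\varphi_t}_{t+1})$. Applying the strong rectangularity identity \eqref{eq:rect-strong} with $X := \ell(S^\varphi_T)$ rewrites $V = \vsup_\theta^t \bE^\theta_t\big[\,\vsup_\theta^{t+1} \bE^\theta_{t+1}[\ell(S^\varphi_T)]\,\big]$. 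Setting $W := \vsup_\theta^{t+1} \bE^\theta_{t+1}[\ell(S^{\widetilde\varphi}_T)]$ and using $S^\varphi_T = S^{\widetilde\varphi}_T$, I obtain $W \in \cV_{t+1}(S^{\varphi_t}_{t+1})$ and $V = \vsup_\theta^t \bE^\theta_t[W]$, which is exactly the defining form of a member of $\cR_t(S_t, \cV_{t+1})$.

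For the reverse inclusion $\cR_t(S_t, \cV_{t+1}) \subseteq \cV_t(S_t)$, take $R = \vsup_\theta^t \bE^\theta_t[W]$ with $\varphi_t \in \mathfrak{A}_t(S_t)$ and $W \in \cV_{t+1}(S^{\varphi_t}_{t+1})$. By definition of $\cV_{t+1}$ there is $\widetilde\varphi \in \mathfrak{A}^{t+1}(S^{\varphi_t}_{t+1})$ with $W = \vsup_\theta^{t+1} \bE^\theta_{t+1}[\ell(S^{\widetilde\varphi}_T)]$. Concatenating to $\bar\varphi := (\varphi_t, \widetilde\varphi) \in \mathfrak{A}^t(S_t)$, so that $S^{\bar\varphi}_T = S^{\widetilde\varphi}_T$, a second application of \eqref{eq:rect-strong} gives $R = \vsup_\theta^t \bE^\theta_t\big[\,\vsup_\theta^{t+1} \bE^\theta_{t+1}[\ell(S^{\bar\varphi}_T)]\,\big] = \vsup_\theta^t \bE^\theta_t[\ell(S^{\bar\varphi}_T)] \in \cV_t(S_t)$. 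This establishes $\cV_t(S_t) = \cR_t(S_t, \cV_{t+1})$.

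Finally, $\cV_t(S_t) = \cB_t(S_t)$ follows by backward induction on $t$. The base case $t=T$ holds by definition, since $\cV_T(S_T) = \ell(S_T) = \cB_T(S_T)$. For the inductive step, assume $\cV_{t+1}(S_{t+1}) = \cB_{t+1}(S_{t+1})$ for every state $S_{t+1}$. As the definition of $\cB_t$ is precisely $\cR_t$ with $\cB_{t+1}$ in place of $\cV_{t+1}$, the induction hypothesis gives $\cB_t(S_t) = \cR_t(S_t, \cB_{t+1}) = \cR_t(S_t, \cV_{t+1})$, and the identity proved above yields $\cR_t(S_t, \cV_{t+1}) = \cV_t(S_t)$; chaining these gives all three sets equal. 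I expect the main obstacle to be conceptual rather than computational: recognizing that the set relations from the earlier theorems are genuinely insufficient, and that the right move is to exploit antisymmetry to convert rectangularity into an \emph{exact} equality of uniquely-defined suprema. The only technical care needed is to verify that each constructed strategy is admissible and generates the correct terminal state $S^{\bar\varphi}_T = S^{\widetilde\varphi}_T$, which is what permits the inner supremum to be identified as an element of $\cV_{t+1}$.
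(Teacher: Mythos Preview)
Your proposal is correct and follows essentially the same approach as the paper: both arguments exploit antisymmetry to make $\vsup$ unique and to upgrade $\preceq$-rectangularity to the equality form~\eqref{eq:rect-strong}, then prove $\cV_t(S_t)=\cR_t(S_t,\cV_{t+1})$ by a direct two-inclusion argument (decompose/concatenate strategies and apply~\eqref{eq:rect-strong}), and finally obtain $\cV_t(S_t)=\cB_t(S_t)$ by backward induction from the recursive definition of $\cB_t$. Your explicit spelling-out of the inductive step $\cB_t(S_t)=\cR_t(S_t,\cB_{t+1})=\cR_t(S_t,\cV_{t+1})=\cV_t(S_t)$ is exactly what the paper compresses into one sentence.
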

\begin{proof}
Recall that in the case of a partial order, the supremum operator $\vsup$ is uniquely defined and the $\preceq$-rectangularity property translates to~\eqref{eq:rect-strong}. Fix time $t$ and a state $S_t$; let us start with proving $\cV_t(S_t) \subseteq \cR_t (S_t, \cV_{t+1})$. Take arbitrary element 
\begin{align*}
		{\vsup\limits_{\theta \in \Theta}}^t  \bE^\theta_t [\ell(S^\varphi_T)] \in \cV_t(S_t)
\end{align*}
	which corresponds to some $\varphi = (\varphi_t, \tilde{\varphi}) \in \mathfrak{A}^t (S_t)$. Note that $\varphi_t \in \mathfrak{A}_t (S_t)$, $\tilde{\varphi} \in \mathfrak{A}^{t+1} (S^{\varphi_t}_{t+1})$ and $S^\varphi_T = S^{\tilde{\varphi}}_T$. According to~\eqref{eq:rect-strong} it holds
	\begin{align*}
		{\vsup\limits_{\theta \in \Theta}}^t  \bE^\theta_t [\ell(S^\varphi_T)] = {\vsup\limits_{\theta \in \Theta}}^t  \bE^\theta_t \left[ {\vsup\limits_{\theta \in \Theta}}^{t+1}  \bE^\theta_{t+1} [\ell(S^{\tilde{\varphi}}_T)] \right].
	\end{align*}
By uniqueness of the supremum operator it follows that ${\vsup\limits_{\theta \in \Theta}}^t  \bE^\theta_t \left[ {\vsup\limits_{\theta \in \Theta}}^{t+1}  \bE^\theta_{t+1} [\ell(S^{\tilde{\varphi}}_T)] \right] \in \cR_t (S_t, \cV_{t+1})$, which proves $\cV_t(S_t) \subseteq \cR_t (S_t, \cV_{t+1})$.\\
	
	Now let us prove $\cR_t (S_t, \cV_{t+1}) \subseteq \cV_t(S_t)$:  Take arbitrary element 
	\begin{align*}
		\vsup\limits_{\theta \in \Theta}  \bE^\theta_t [X] \in \cR_t (S_t, \cV_{t+1}),
	\end{align*}
	 which corresponds to some $\varphi_t \in \mathfrak{A}_t (S_t)$ and $X \in \cV^{t+1} (S^{\varphi_t}_{t+1})$. The element $X \in \cV^{t+1} (S^\varphi_{t+1})$ corresponds to some strategy $\tilde{\varphi} \in \mathfrak{A}^{t+1} (S^{\varphi_t}_{t+1})$ such that
	\begin{align*}
		X = {\vsup\limits_{\theta \in \Theta}}^{t+1}  \bE^\theta_{t+1} [\ell(S^{\tilde{\varphi}}_T)].
	\end{align*}
	Note that the combined strategy $\varphi = (\varphi_t, \tilde{\varphi}) \in \mathfrak{A}^t (S_t)$ generates the terminal state $S^\varphi_T = \S^{\tilde{\varphi}}_T$. Then, according to~\eqref{eq:rect-strong} it holds
	\begin{align*}
		{\vsup\limits_{\theta \in \Theta}}^t  \bE^\theta_t [X] = {\vsup\limits_{\theta \in \Theta}}^t  \bE^\theta_t \left[ {\vsup\limits_{\theta \in \Theta}}^{t+1}  \bE^\theta_{t+1} [\ell(S^{\varphi}_T)] \right] = {\vsup\limits_{\theta \in \Theta}}^t  \bE^\theta_t \left[ \ell(S^{\bar{\varphi}}_T) \right].
	\end{align*}
	By uniqueness of the supremum operator it follows that ${\vsup\limits_{\theta \in \Theta}}^t  \bE^\theta_t \left[ \ell(S^{{\varphi}}_T) \right] \in \cV_t(S_t)$, which proves $\cR_t (S_t, \cV_{t+1}) \subseteq \cV_t(S_t)$.
	
	The remaining claim follows inductively from the recursive definition of $\cB_t (S_t)$.

\end{proof}

\section{Dynamic programming under the component-wise partial order}
\label{sec:component-wise}

This section is dedicated to a particular, but important, case of the partial order $\leq$ corresponding to the ordering cone $C=\bR_+^d$. This partial order, as well as its extension $\leq^t$ to the space of random vectors, corresponds to the natural component-wise comparison of (random) vectors, $X\leq Y$ iff for all $i=1,\ldots,d$ it holds    $X_i\leq Y_i$, a.s.  

We recall from Definition~\ref{def:supremum0} that the vector-valued supremum is (i) an \textit{upper bound} and (ii) the \textit{smallest} among  upper bounds. In case of the component-wise order we not only have existence and uniqueness, but also an explicit formula.
\begin{lemma}
For arbitrary time $t \in \cT'$ and arbitrary collection of random vectors $\{X^\theta\}_{\theta \in \Theta} \subseteq \sL_t (\bR^d)$ the supremum with respect to the partial order $\leq^t$ exists, is unique and given by
\begin{align}
\label{eq:vsup-component}
		\left( \vsup\limits_{\theta\in \Theta} X^\theta \right)_i (\omega) := \max\limits_{\theta \in \Theta} X_i^\theta (\omega), 
\end{align}
for $\omega \in \Omega$ and $i \in \{1, \dots, d\}$.
\end{lemma}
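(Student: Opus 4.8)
The plan is to verify directly that the random vector $V$ defined component-wise by $V_i(\omega) := \max_{\theta \in \Theta} X_i^\theta(\omega)$ satisfies the two defining properties of a supremum in Definition~\ref{def:supremum0}, and then to obtain uniqueness from antisymmetry of the order.

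First I would check that $V$ is a legitimate element of $\sL_t(\bR^d)$. Since $\Theta$ is finite and each $X^\theta$ is $\sF_t$-measurable, the pointwise maximum over $\theta$ of finitely many $\sF_t$-measurable functions is again $\sF_t$-measurable; integrability of each component follows from the bound $|V_i| \le \max_{\theta \in \Theta} |X_i^\theta| \le \sum_{\theta \in \Theta} |X_i^\theta|$, a finite sum of integrable functions. Thus $V$ is a well-defined element of $\sL_t(\bR^d)$.

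Next, for the upper-bound property (i), I would note that by construction $V_i(\omega) \ge X_i^\theta(\omega)$ for every $\theta \in \Theta$, every coordinate $i$, and every $\omega$, so $V - X^\theta$ takes values in $\bR^d_+ = C$ pointwise; hence $V - X^\theta \in \sL_t(C)$ and $X^\theta \le^t V$ for all $\theta \in \Theta$. For the least-upper-bound property (ii), I would take any $A \in \sL_t(\bR^d)$ with $X^\theta \le^t A$ for all $\theta \in \Theta$. Each such relation gives $A_i \ge X_i^\theta$ $\bP$-a.s. for every $i$; because $\Theta$ is finite, intersecting these finitely many full-measure events yields $A_i(\omega) \ge \max_{\theta \in \Theta} X_i^\theta(\omega) = V_i(\omega)$ $\bP$-a.s. for each $i$. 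Therefore $A - V \in \sL_t(C)$, that is $V \le^t A$, which is precisely property (ii).

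Finally, uniqueness is immediate: since $C = \bR^d_+$ is a pointed convex cone, $\le^t$ is a vector partial order, and Lemma~\ref{lemma:invariantSup}(c) then guarantees that the supremum is unique $\bP$-a.s. I do not expect a genuine obstacle here; the only point demanding care is the interchange of the ``for all $\theta$'' and ``$\bP$-a.s.'' quantifiers in step (ii), which is exactly where the standing finiteness assumption on $\Theta$ enters — for an infinite family the pointwise maximum could fail to be measurable, or the individual a.s. inequalities could not be merged into a single a.s. inequality.
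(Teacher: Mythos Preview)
Your proof is correct. The paper takes a slightly different route: it invokes Lemma~\ref{lemma:cone2Sufficeint} to obtain existence of a supremum (the dual cone of $\bR^d_+$ is $\bR^d_+$ itself, generated by the linearly independent standard basis vectors), then cites Lemma~\ref{lemma:invariantSup}(c) for uniqueness, and finally remarks that the explicit formula can be verified via Definition~\ref{def:supremum0}. You instead bypass Lemma~\ref{lemma:cone2Sufficeint} entirely by writing down the candidate $V$ and checking Definition~\ref{def:supremum0} directly, which yields existence and the explicit formula in one step; uniqueness is handled the same way in both proofs. Your approach is more elementary and self-contained, and it also makes explicit the measurability and integrability of $V$ as well as the role of the finiteness of $\Theta$ in merging the a.s.\ inequalities --- points the paper leaves implicit. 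The paper's approach has the virtue of illustrating that this lemma is a special case of the general polyhedral-cone machinery already developed.
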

\begin{proof}
Existence follows from Lemma~\ref{lemma:cone2Sufficeint}, uniqueness follows from $\leq^t$ being a partial order and Lemma~\ref{lemma:invariantSup}. Validity of~\eqref{eq:vsup-component} can be verified through properties in Definition~\ref{def:supremum0}.
\end{proof}

\begin{remark}
The approach of this paper is motivated by the stream of literature on (static) robust multi-objective optimization using a vector-valued (or ideal point) notion of supremum of a collection of vectors. In particular, the supremum operator \eqref{eq:vsup-component} under the component-wise partial order corresponds to the \textit{robustified objective} proposed in \cite{FliegeWerner2014}. Results of this section illustrate that the notion naturally extends to the dynamic setting and measurability is preserved.
\end{remark}

It is worth mentioning that similar statements and constructions of supremum hold true for any (solid, convex) ordering cone $C$ in dimension $d=2$, see Remark~\ref{rem:2d-sup}. Hence, the important case of problems with dual objectives can be addressed using a similar approach as outlined in this section.

Assumptions on the dynamics of the controlled process $(S_t)_{t \in\cT}$ remain unchanged and 
our interest remains to be dynamic programming for the robust multi-objective stochastic control problem, 
\begin{align}
	\label{prob_robust2}
	\begin{split}
	\text{minimize }& \quad {\vsup\limits_{\theta \in \Theta}}^t\, \bE^\theta_t [\ell(S^\varphi_T)] \quad \text{ with respect to } \leq^t \\
	\text{subject to }& \quad \varphi \in \mathfrak{A}^t(S_t). 
	\end{split}
\end{align}
From Section~\ref{sec:dyn-prog-preorder} we know that the family of robust multi-objective problems satisfies the weak set-valued Bellman's principle and under a rectangularity assumption on the family $\Theta$ it also satisfies strong set-valued Bellman's principle. Verifying $\preceq$-rectangularity of $\Theta$ under a general preorder $\preceq$ as well as constructing a $\preceq$-rectangular family of market models is an open challenge. However, the situation simplifies for the component-wise order, where known results on (scalar) rectangularity can be directly used.

\begin{definition}[\cite{Shapiro2016}]
\label{def:rectangle-1d}
We say that the family of models $\Theta$ is \textit{$m$-rectangular} if for all times $t=0, 1, \dots, T-1$ and all random variables $X \in \sL_T(\bR)$ it holds
	\begin{align}\label{eq:rect-1d}
		{\sup\limits_{\theta \in \Theta}}^t  \bE^\theta_t  \left[ {\sup\limits_{\theta \in \Theta}}^{t+1} \bE^\theta_{t+1} [X]  \right] 
		=  {\sup\limits_{\theta \in \Theta}}^t \bE^\theta_t  [X].
	\end{align}
\end{definition} 
\begin{lemma}
The family of models $\Theta$ is $m$-rectangular if and only if it is $\leq$-rectangular.
\end{lemma}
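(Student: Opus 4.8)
The plan is to exploit the fact that under the component-wise order everything decouples across coordinates. First I would collect two facts already available in the excerpt. By the explicit formula \eqref{eq:vsup-component}, the operator $\vsup^t$ is computed coordinate by coordinate as a pointwise maximum over the (finite) set $\Theta$; in particular the scalar operator ${\sup\limits_{\theta\in\Theta}}^t$ appearing in \eqref{eq:rect-1d} is exactly the $d=1$ instance of \eqref{eq:vsup-component}, i.e. the a.s.\ maximum $\max_{\theta\in\Theta}\bE^\theta_t[\,\cdot\,]$. Second, since $\leq$ is a partial order, Remark~\ref{rem:vsup-vsup}(iv) tells us that $\leq$-rectangularity is equivalent to its equality form \eqref{eq:rect-strong}. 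Since the conditional expectation $\bE^\theta_t$ is also a coordinate-wise operator, the entire nested expression in \eqref{eq:rect-strong} can be read off one coordinate at a time.

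Concretely, I would establish the coordinate identity that for any finite family $\{Y^\theta\}_{\theta\in\Theta}\subseteq \sL_t(\bR^d)$ and any index $i$,
\[
\left({\vsup\limits_{\theta\in\Theta}}^t Y^\theta\right)_i \;=\; \max_{\theta\in\Theta} Y_i^\theta \;=\; {\sup\limits_{\theta\in\Theta}}^t Y_i^\theta,
\]
which is immediate from \eqref{eq:vsup-component}. Applying this with $Y^\theta=\bE^\theta_t[\,\cdot\,]$, and using that $\bE^\theta_t$ commutes with projection onto the $i$-th coordinate, the $i$-th coordinate of the left-hand side of \eqref{eq:rect-strong} is precisely ${\sup\limits_{\theta\in\Theta}}^t \bE^\theta_t\!\left[{\sup\limits_{\theta\in\Theta}}^{t+1}\bE^\theta_{t+1}[X_i]\right]$, while the $i$-th coordinate of the right-hand side is ${\sup\limits_{\theta\in\Theta}}^t\bE^\theta_t[X_i]$. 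Hence the vector equality \eqref{eq:rect-strong} for a given $X\in\sL_T(\bR^d)$ holds if and only if the scalar identity \eqref{eq:rect-1d} holds for each of its coordinates $X_1,\dots,X_d\in\sL_T(\bR)$.

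From this equivalence both implications follow directly. For $m$-rectangular $\Rightarrow$ $\leq$-rectangular: given $X\in\sL_T(\bR^d)$, apply \eqref{eq:rect-1d} to each coordinate $X_i$ (each lies in $\sL_T(\bR)$) and recombine coordinate-wise to obtain \eqref{eq:rect-strong}, i.e.\ $\leq$-rectangularity. For the converse: given a scalar $X\in\sL_T(\bR)$, embed it as a vector, e.g.\ $\tilde X:=(X,0,\dots,0)\in\sL_T(\bR^d)$, apply $\leq$-rectangularity to $\tilde X$, and read off the first coordinate to recover \eqref{eq:rect-1d} for $X$.

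I expect no serious obstacle here; the only point requiring care is the bookkeeping that identifies the scalar operator ${\sup}^t$ in \eqref{eq:rect-1d} with a single coordinate of $\vsup^t$, together with the (trivial but necessary) observation that every scalar $X$ can be realized as a coordinate of a vector so that the converse direction is not vacuous. Both reduce to the coordinate-wise formula \eqref{eq:vsup-component} and the finiteness of $\Theta$, which guarantees the essential suprema are genuine maxima.
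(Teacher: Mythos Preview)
Your argument is correct and follows the same approach as the paper: both implications are obtained from the component-wise formula~\eqref{eq:vsup-component}, which reduces the vector identity~\eqref{eq:rect-strong} to the scalar identity~\eqref{eq:rect-1d} coordinate by coordinate. The paper states this in a single sentence, while you have spelled out the bookkeeping (including the embedding $\tilde X=(X,0,\dots,0)$ for the converse), but the substance is identical.
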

\begin{proof}
Both implications follow from the component-wise construction~\eqref{eq:vsup-component} of supremum with respect to $\leq^t$.
\end{proof}

\begin{remark}
\cite{Shapiro2016} provides an approach for recursive constructions of $m$-rectangular family of probability measures from marginals. The same approach can, therefore, be used to construct $\leq$-rectangular family $\Theta$.
\end{remark}

Let us now summarize the results derived in Section~\ref{sec:dyn-prog-preorder} within a context of coordinate-wise order. Recall that  central role is played by the value function
\begin{align*}
	\cV_t (S_t) &= \bigcup\limits_{\varphi\in\mathfrak{A}} \vsup_{\theta\in \Theta} \bE^{\theta}_t \left[ \ell(S^\varphi_T) \right]
\end{align*}
consisting of the suprema over all feasible strategies, alongside the recursive value function $\cB_t(S_t)$.

\begin{corollary}\label{prop:Bellman-comp}
For every time $t \in\cT'$ and every state $S_t$  we have
	\begin{align*}
		\cB_t (S_t) \subseteq \cV_t(S_t) + \sL_t(\bR^d_+) \quad \quad \text{and } \quad \quad \cV_t(S_t) \subseteq \cB_t (S_t) - \sL_t(\bR^d_+).
	\end{align*}
If, additionally, the set of models $\Theta$ is $m$-rectangular, then for every time $t\in\cT'$ and every state $S_t$
	\begin{align*}
		\cV_t(S_t) = \cB_t (S_t) = \left\lbrace \vsup_{\theta\in \Theta} \bE^{\theta}_t \left[ X \right] \; \vert \; \varphi_t \in \mathfrak{A}_t(S_t), X \in \cV_{t+1}( F(t,S_t,\varphi_t, Z_{t+1}))  \right\rbrace.
	\end{align*}
\end{corollary}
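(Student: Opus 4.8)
The plan is to obtain the entire statement as a specialization of the general results of Section~\ref{sec:dyn-prog-preorder} to the ordering cone $C = \bR^d_+$, for which $\preceq^t$ becomes $\leq^t$ and $\sL_t(C) = \sL_t(\bR^d_+)$. The first prerequisite is to confirm that Assumption~\ref{assum:vsup} is in force here: this is immediate from the explicit component-wise formula~\eqref{eq:vsup-component}, which guarantees that a (unique) supremum exists for every finite collection in $\sL_t(\bR^d)$. With Assumption~\ref{assum:vsup} verified, the two inclusions in the first part of the corollary are precisely the inclusions~\eqref{eq:proofVB1} and~\eqref{eq:proofVB2} of Theorem~\ref{th:Bellman0-1}, read off at $C = \bR^d_+$.

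For the equalities under the additional $m$-rectangularity hypothesis, I would first record that $\leq$ is a vector partial order, since its ordering cone $\bR^d_+$ is pointed; in particular $\leq$ is antisymmetric. Next, by the equivalence lemma just established, $m$-rectangularity of $\Theta$ is the same as $\leq$-rectangularity. Thus both hypotheses of Theorem~\ref{th:Bellman4}---antisymmetry of the order and $\preceq$-rectangularity of $\Theta$---are met, and that theorem yields $\cV_t(S_t) = \cR_t(S_t, \cV_{t+1}) = \cB_t(S_t)$ for every $t \in \cT'$ and every state $S_t$. It remains only to observe that, by the very definition of $\cR_t(\cdot, \cV_{t+1})$, the middle term $\cR_t(S_t, \cV_{t+1})$ is literally the set displayed on the right-hand side of the corollary, which closes the chain of equalities.

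Since all the analytic content has already been carried out in Theorems~\ref{th:Bellman0-1} and~\ref{th:Bellman4}, there is no genuine obstacle to overcome; the only task is the bookkeeping of verifying that the hypotheses of those theorems hold in the component-wise setting. The single point deserving explicit mention is the validity of Assumption~\ref{assum:vsup}, which underlies Theorem~\ref{th:Bellman0-1} and is supplied by~\eqref{eq:vsup-component}.
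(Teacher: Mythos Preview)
Your proposal is correct and matches the paper's approach: the corollary is presented in the paper as a direct summary of the results of Section~\ref{sec:dyn-prog-preorder} specialized to $C=\bR^d_+$, with no separate proof given. Your identification of Theorem~\ref{th:Bellman0-1} for the inclusions, the equivalence of $m$- and $\leq$-rectangularity, and Theorem~\ref{th:Bellman4} for the equalities (together with the observation that the displayed set is $\cR_t(S_t,\cV_{t+1})$) is exactly the intended route.
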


As outlined in Remark~\ref{rem:upper-im}, an alternative candidate for the value function of a multi-objective control problem is (a set-valued mapping attaining) the upper image of the problem. Here we shortly explore dynamic programming results within the context of an upper image. 
In what follows we assume that the topology is generated by the Euclidean norm. The upper image of the robust multi-objective problem~\eqref{prob_robust2} is
\begin{align*}
\begin{split}
	\cP_t (S_t) &=  \cl \bigcup\limits_{\varphi\in\mathfrak{A}^t(S_t)} \vsup_{\theta\in \Theta} \bE^{\theta}_t \left[ \ell(S^\varphi_T) \right] + \sL_t(\bR^d_+) = \cl \left( \cV_t(S_t) + \sL_t(\bR^d_+) \right) \\
	&= \cl \left\lbrace X \in \sL_t(\bR^d) \quad \vert \quad \exists \varphi \in \mathfrak{A}^t(S_t) \quad \forall \theta\in \Theta : \quad \bE^{\theta}_t \left[ \ell(S^\varphi_T) \right] \leq X \right\rbrace.
\end{split}
\end{align*}
Analogous to the results for the value function $\cV$, the next result gives a weak, as well as strong, set-valued Bellman's principle for the robust problem in terms of the upper image.
\begin{proposition}\label{prop:Bellman-comp2}
For every time $t \in\cT'$ and every state $S_t$  we have
\begin{align}
\label{eq:P1}
\left\lbrace \vsup_{\theta\in \Theta} \bE^{\theta}_t \left[ X \right] \; \vert \; \varphi_t \in \mathfrak{A}_t(S_t), X \in \cP_{t+1}( F(t,S_t,\varphi_t, Z_{t+1}))  \right\rbrace \subseteq \cP_t(S_t).
\end{align}
If, additionally, the set of models $\Theta$ is $m$-rectangular, then for every time $t\in\cT'$ and every state $S_t$
\begin{align}
\label{eq:P2}
\cP_t(S_t) =   \cl \left\lbrace \vsup_{\theta\in \Theta} \bE^{\theta}_t \left[ X \right] \; \vert \; \varphi_t \in \mathfrak{A}_t(S_t), X \in \cP_{t+1}( F(t,S_t,\varphi_t, Z_{t+1}))  \right\rbrace.
\end{align}
\end{proposition}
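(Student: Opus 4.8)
The plan is to treat the set on the left-hand side of \eqref{eq:P1} as a one-step operator applied to the upper image $\cP_{t+1}$, in direct analogy with the set $\cR_t(S_t,\cV_{t+1})$ of Section~\ref{sec:dyn-prog-preorder}; I will write $\cR_t(S_t,\cP_{t+1})$ for it. Once \eqref{eq:P1} is established, the inclusion ``$\supseteq$'' in \eqref{eq:P2} is immediate and needs no rectangularity: since $\cR_t(S_t,\cP_{t+1})\subseteq\cP_t(S_t)$ and $\cP_t(S_t)$ is closed by definition, taking closures gives $\cl\,\cR_t(S_t,\cP_{t+1})\subseteq\cP_t(S_t)$. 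Thus the actual work consists of proving \eqref{eq:P1} and the reverse inclusion ``$\subseteq$'' in \eqref{eq:P2}.

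For \eqref{eq:P1}, fix $\varphi_t\in\mathfrak{A}_t(S_t)$, write $S^{\varphi_t}_{t+1}=F(t,S_t,\varphi_t,Z_{t+1})$, and take $X\in\cP_{t+1}(S^{\varphi_t}_{t+1})=\cl\big(\cV_{t+1}(S^{\varphi_t}_{t+1})+\sL_{t+1}(\bR^d_+)\big)$. I would first treat the dense subset: if $X=V+c$ with $V\in\cV_{t+1}(S^{\varphi_t}_{t+1})$ and $c\in\sL_{t+1}(\bR^d_+)$, then $V\leq^{t+1}X$, so monotonicity (Lemma~\ref{lemma:monotone}) gives $\vsup_{\theta\in\Theta}\bE^\theta_t[V]\leq^t\vsup_{\theta\in\Theta}\bE^\theta_t[X]$. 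Since $\vsup_{\theta\in\Theta}\bE^\theta_t[V]\in\cR_t(S_t,\cV_{t+1})$, Theorem~\ref{th:Bellman1} places it in $\cV_t(S_t)+\sL_t(\bR^d_+)$, whence $\vsup_{\theta\in\Theta}\bE^\theta_t[X]\in\cV_t(S_t)+\sL_t(\bR^d_+)\subseteq\cP_t(S_t)$. For a general $X\in\cP_{t+1}(\cdot)$ I would approximate by a sequence $X_n\to X$ in $\sL_{t+1}(\bR^d)$ drawn from this dense subset and pass to the limit, using that $X\mapsto\vsup_{\theta\in\Theta}\bE^\theta_t[X]$ is $L^1$-continuous (each $\bE^\theta_t$ is an $L^1$-contraction and, by the explicit formula \eqref{eq:vsup-component}, the componentwise maximum over the finite set $\Theta$ is Lipschitz) together with the closedness of $\cP_t(S_t)$.

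For ``$\subseteq$'' in \eqref{eq:P2}, I would invoke rectangularity. As $\leq$ is a partial order and $m$-rectangularity coincides with $\leq$-rectangularity, Theorem~\ref{th:Bellman4} (equivalently Corollary~\ref{prop:Bellman-comp}) gives $\cV_t(S_t)=\cR_t(S_t,\cV_{t+1})$, and since $\cV_{t+1}(\cdot)\subseteq\cP_{t+1}(\cdot)$ this yields $\cV_t(S_t)\subseteq\cR_t(S_t,\cP_{t+1})$. The key remaining point is that $\cR_t(S_t,\cP_{t+1})$ is stable under adding $\sL_t(\bR^d_+)$: given $R=\vsup_{\theta\in\Theta}\bE^\theta_t[X]$ with $X\in\cP_{t+1}(S^{\varphi_t}_{t+1})$ and $c\in\sL_t(\bR^d_+)$, the vector $X+c$ again lies in $\cP_{t+1}(S^{\varphi_t}_{t+1})$ (an upper image absorbs its nonnegative cone, and $\sF_t\subseteq\sF_{t+1}$ so $c\in\sL_{t+1}(\bR^d_+)$); since $c$ is $\sF_t$-measurable it factors out of the conditional expectation and, by \eqref{eq:vsup-component}, out of the supremum, so $\vsup_{\theta\in\Theta}\bE^\theta_t[X+c]=R+c\in\cR_t(S_t,\cP_{t+1})$. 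Hence $\cV_t(S_t)+\sL_t(\bR^d_+)\subseteq\cR_t(S_t,\cP_{t+1})\subseteq\cl\,\cR_t(S_t,\cP_{t+1})$, and taking closures gives $\cP_t(S_t)=\cl\big(\cV_t(S_t)+\sL_t(\bR^d_+)\big)\subseteq\cl\,\cR_t(S_t,\cP_{t+1})$.

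The two places I expect to require the most care are both closure-related. First, in \eqref{eq:P1}, justifying the limit passage: I must confirm the approximating decomposition $X_n=V_n+c_n$ can be selected in $\sL_{t+1}(\bR^d)$ and that $X\mapsto\vsup_{\theta\in\Theta}\bE^\theta_t[X]$ is genuinely continuous for the $L^1$ topology defining the closure, so that membership in the closed set $\cP_t(S_t)$ survives the limit. Second, in the hard inclusion of \eqref{eq:P2}, the stability of $\cR_t(S_t,\cP_{t+1})$ under $\sF_t$-measurable nonnegative shifts rests on upgrading the shift property of $\vsup$ (Lemma~\ref{lemma:invariantSup}(d)) from deterministic to $\sF_t$-measurable shifts, which is exactly what the componentwise formula \eqref{eq:vsup-component} supplies; for a general preorder this step is precisely where the argument would break down.
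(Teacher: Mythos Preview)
Your proposal is correct and follows the same skeleton as the paper: for \eqref{eq:P1}, treat the dense subset $\cV_{t+1}+\sL_{t+1}(\bR^d_+)$ via Theorem~\ref{th:Bellman1} and monotonicity, then pass to the closure; for ``$\subseteq$'' in \eqref{eq:P2}, use rectangularity (Corollary~\ref{prop:Bellman-comp}) to embed $\cV_t$ into $\cR_t(S_t,\cP_{t+1})$, absorb the cone $\sL_t(\bR^d_+)$, and close up.

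The one tactical difference is exactly where you anticipated care would be needed, namely the closure step in \eqref{eq:P1}. You approximate a general $X\in\cP_{t+1}$ by an arbitrary $L^1$-convergent sequence from the dense set and invoke $L^1$-continuity of $X\mapsto\vsup_\theta\bE^\theta_t[X]$ (which your sketch justifies correctly via the finite componentwise maximum). The paper instead uses the specific interior-shift sequence $X^n=X+\tfrac{1}{n}\mathbf{1}$: since $\mathbf{1}\in\mathrm{int}\,\bR^d_+$, one has $X^n\in\cP_{t+1}+\mathrm{int}\,\sL_{t+1}(\bR^d_+)\subseteq\cV_{t+1}+\sL_{t+1}(\bR^d_+)$, and the deterministic shift pulls through by Lemma~\ref{lemma:invariantSup}(d), giving $\vsup_\theta\bE^\theta_t[X^n]=\vsup_\theta\bE^\theta_t[X]+\tfrac{1}{n}\mathbf{1}$ directly. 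This avoids the continuity argument altogether and stays within tools already recorded in the paper; your route works too but imports one extra (easy) fact. For ``$\subseteq$'' in \eqref{eq:P2} the paper deploys the same $\tfrac{1}{n}\mathbf{1}$ device on the $\cP_t$ side, whereas your stability-under-$\sL_t(\bR^d_+)$-shifts argument is arguably tidier and makes the role of the componentwise formula \eqref{eq:vsup-component} (upgrading Lemma~\ref{lemma:invariantSup}(d) to $\sF_t$-measurable shifts) more explicit.
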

\begin{proof}
According to \eqref{eq:valueF1} it holds
\begin{align*}
\left\lbrace \vsup_{\theta\in \Theta} \bE^{\theta}_t \left[ X \right] \; \vert \; \varphi_t \in \mathfrak{A}_t(S_t), X \in \cV_{t+1}( S_{t+1}^{\varphi_t})  \right\rbrace \subseteq \cV_t(S_t) + \sL_t(\bR^d_+) \subseteq \cP_t(S_t).
\end{align*}
Then, an application of Lemma~\ref{lemma:monotone} implies 
\begin{align}
\label{eq:proofP1}
\begin{split}
&\left\lbrace \vsup_{\theta\in \Theta} \bE^{\theta}_t \left[ X \right] \; \vert \; \varphi_t \in \mathfrak{A}_t(S_t), X \in \cV_{t+1}( S_{t+1}^{\varphi_t}) + \sL_{t+1}(\bR^d_+)  \right\rbrace  \\ &\subseteq
\left\lbrace \vsup_{\theta\in \Theta} \bE^{\theta}_t \left[ X \right] \; \vert \; \varphi_t \in \mathfrak{A}_t(S_t), X \in \cV_{t+1}( S_{t+1}^{\varphi_t})  \right\rbrace + \sL_t(\bR^d_+) \subseteq \cP_t(S_t).
\end{split}
\end{align}
We denote by $\mathbf{1}$ a $d$-dimensional vector constantly equal to one. Fix some $\varphi_t \in \mathfrak{A}_t(S_t)$ and take arbitrary $X \in \cP_{t+1} (S_{t+1}^{\varphi_t})$. Then for all $n \in \bN$ it holds $X^n := X + \frac{1}{n} \mathbf{1} \in \cP_{t+1} (S_{t+1}^{\varphi_t}) + \text{int } \sL_{t+1}(\bR^d_+)  \subseteq  \cV_{t+1}( S_{t+1}^{\varphi_t}) + \sL_{t+1}(\bR^d_+)$ and, therefore, by~\eqref{eq:proofP1} it follows $\vsup_{\theta\in \Theta} \bE^{\theta}_t \left[ X^n \right] \in \cP_t(S_t)$. At the same time, by Lemma~\ref{lemma:invariantSup}(d) it holds
\begin{align*}
\vsup_{\theta\in \Theta} \bE^{\theta}_t \left[ X^n \right] = \vsup_{\theta\in \Theta} \bE^{\theta}_t \left[ X + \frac{1}{n} \mathbf{1} \right] = \vsup_{\theta\in \Theta} \left( \bE^{\theta}_t \left[ X  \right] + \frac{1}{n} \mathbf{1} \right) = \vsup_{\theta\in \Theta} \left( \bE^{\theta}_t \left[ X  \right]  \right) + \frac{1}{n}\mathbf{1}.
\end{align*}
Therefore, the sequence  $\left\lbrace \vsup_{\theta\in \Theta} \bE^{\theta}_t \left[ X^n \right] \right\rbrace_{n \in \bN} \subseteq \cP_t(S_t)$ converges (in the norm-topology) to $\vsup_{\theta\in \Theta}\bE^{\theta}_t \left[ X  \right]$. Since  the upper image $\cP_t (S_t)$ is closed, it holds $\vsup_{\theta\in \Theta}\bE^{\theta}_t \left[ X  \right] \in \cP_t(S_t)$, which proves \eqref{eq:P1}.

Now let $\Theta$ be ($m$- and equivalently $\leq$-)rectangular. The inclusion $\supseteq$ in \eqref{eq:P2} follows from \eqref{eq:P1} and $\cP_t(S_t)$ being a closed set. From Corollary~\ref{prop:Bellman-comp}, inclusion $\sL_t(\bR^d_+) \subseteq \sL_{t+1} (\bR^d_+)$ and Lemma~\ref{lemma:monotone} we obtain
\begin{align*}
\cV_t(S_t) + \sL_t(\bR^d_+) &= \left\lbrace \vsup_{\theta\in \Theta} \bE^{\theta}_t \left[ X \right] \; \vert \; \varphi_t \in \mathfrak{A}_t(S_t), X \in \cV_{t+1}( S_{t+1}^{\varphi_t}) + \sL_{t+1}(\bR^d_+)  \right\rbrace \\
& \subseteq \cl \left\lbrace \vsup_{\theta\in \Theta} \bE^{\theta}_t \left[ X \right] \; \vert \; \varphi_t \in \mathfrak{A}_t(S_t), X \in \cP_{t+1}( F(t,S_t,\varphi_t, Z_{t+1}))  \right\rbrace.
\end{align*}
Then for arbitrary $Y \in \cP_t(S_t)$ we can define a sequence $\left\lbrace Y + \frac{1}{n} \mathbf{1} \right\rbrace_{n \in \bN} \subseteq \cV_t(S_t) + \sL_t(\bR^d_+)$ which is contained in the right-hand side of \eqref{eq:P2} and converges to $Y$. Therefore, the result follows by the right-hand side of \eqref{eq:P2} being closed.
\end{proof}

Note that the above proof did not make direct use of the specific form of the ideal point supremum~\eqref{eq:vsup-component} under the partial order $\leq^t$. Therefore, a version of Proposition~\ref{prop:Bellman-comp2} (based on $\preceq$-rectangularity and set orders induced by $\preceq$) also holds within the setup of Section~\ref{sec:dyn-prog-preorder}.

\section{Example}
In this section we present some illustrative examples that depict some key points of the study. 

\begin{example}\label{ex:binomial1}

In this example, we illustrate the importance of rectangularity property. 
Consider a two period model $t\in\set{0,1,2}$, and a binomial tree  setup. That is, $\Omega=\set{\omega^1,\omega^2, \omega^3, \omega^4}=\set{ (\omega_1, \omega_2) : \omega_1, \omega_2 \in \{u, d\} }$ that is endowed with filtration $\set{\sF_t}$
 generated by the stochastic factor $Z$  that goes up, or down at each time step, perhaps with different transition probabilities at different steps and/or nodes, see the diagram in Figure~\ref{fig:binomial1}.
A probability measure $\bP$ on this filtered probability space is identified by the  quantities 
\begin{align*}
p_u = \bP(\omega_1=u), \quad p_{u|u} = \bP(\omega_2=u|\omega_1=u), \quad p_{u|d} = \bP(\omega_2=u|\omega_1=d),
\end{align*} 
which in turn would determine uniquely the (marginal) distributions of the stochastic factor $Z$.

\tikzstyle{level 1}=[level distance=2.2cm, sibling distance=2cm,->]
\tikzstyle{level 2}=[level distance=2.2cm, sibling distance=1cm,->]

\tikzstyle{bag} = [text width=2em, text centered]
\tikzstyle{end} = []

\begin{figure}
	\centering 
	\begin{tikzpicture}[grow=right, sloped]
		\node[bag] {  }
		child {
			node[bag] {  }        
			child {
				node[end, label=right:
				{ $\omega^4 = (d,d)$ }] {}
				edge from parent
				node[above] {}
				node[below]  {  }
			}
			child {
				node[end, label=right:
				{ $\omega^3=(d,u)$ }] {}
				edge from parent
				node[above] {}
				node[above]  {$p_{u|d}$}
			}
			edge from parent 
			node[above] {}
			node[below]  {  }
		}
		child {
			node[bag] {   }        
			child {
				node[end, label=right:
				{  $\omega^2=(u,d)$  }] {}
				edge from parent
				node[above] {  }
				node[below]  {}
			}
			child {
				node[end, label=right:
				{ $\omega^1=(\omega_1 = u, \omega_2 = u)$ }] {}
				edge from parent
				node[above] {$p_{u|u}$}
				node[below]  {}
			}
			edge from parent         
			node[above] {$p_u$}
			node[below]  {}
		};
	\end{tikzpicture}
	\caption{\label{fig:binomial1} Tree diagram for filtered probability space in Example~\ref{ex:binomial1}. 
		}
\end{figure}
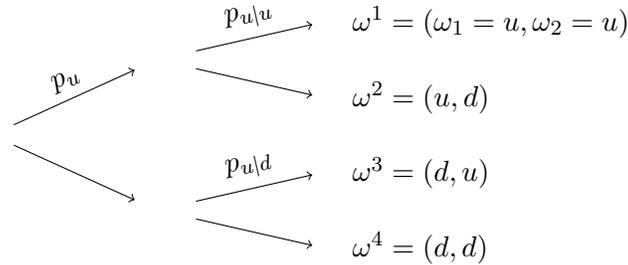

Let us consider two models, given by
\begin{align*}
\bP_{\theta_1}: & \quad 	p_u^1 = 1/4, \quad p_{u|u}^1 = p_{u|d}^1=1/2 \\
\bP_{\theta_2}:	& \quad p_u^2 = 1/2, \quad p_{u|u}^2 = p_{u|d}^2=3/4. 
\end{align*} 
We use $\bP_{\theta_1}$ and $\bP_{\theta_2}$ to construct an $m$-rectangular family of models $\Theta = \{ \theta_1, \dots,  \theta_8 \}$,  given in Table~\ref{tab:1}, for example by following \cite{Shapiro2016} and build probabilities by exhausting all possible combinations of conditional probabilities.  Additionally,   we also take (smaller) family of models $\Theta^0 = \set{\theta_1, \theta_2, \theta_5, \theta_8}$. Family $\Theta^0$ collects models under which the the events $\set{\omega_1=u}$ and $\set{\omega_2=u}$ are independent. We note that $\Theta^0$ is not $m$-rectangular.

\begin{table}[htbp]
\centering 
	\begin{tabular}{|c |  c c c c   c c c c|}
	\hline 								
	& $\theta_1$ 	& $\theta_2$ 	& $\theta_3$ 	& $\theta_4$ 	& $\theta_5$ 	& $\theta_6$ 	& $\theta_7$ 	& $\theta_8$ \\ \hline
	$p^\theta_u$		
	& $\frac{1}{4}$	&  $\frac{1}{2}$	& $\frac{1}{2}$	& $\frac{1}{2}$	& $\frac{1}{2}$	& $\frac{1}{4}$	& $\frac{1}{4}$	& $\frac{1}{4}$	\\[5pt]
	$p^\theta_{u|u}$ 
	& $\frac{1}{2}$	&  $ \frac{3}{4}$	& $\frac{3}{4}$	& $\frac{1}{2}$	& $\frac{1}{2}$	& $\frac{1}{2}$	& $\frac{3}{4}$	& $\frac{3}{4}$	\\[5pt]
	$p^\theta_{u|d}$ 
	& $\frac{1}{2}$	&  $\frac{3}{4}$& $\frac{1}{2}$	& $\frac{3}{4}$	& $\frac{1}{2}$	& $\frac{3}{4}$	& $\frac{1}{2}$	& $\frac{3}{4}$	\\ \hline
\end{tabular}
\caption{Set of probability measures satisfying $m$-rectangularity property.}
\label{tab:1}
\end{table}

Assume that there are only two admissible strategies $\varphi, \psi\in\mathfrak{A}$ that generate two possible outcomes $S_T^\varphi, S_T^\psi$ of the two-dimensional terminal state $S_T\in\sL_T(\bR^2)$, with specific values given in Table~\ref{tab:2}. For the sake of brevity we omit here presenting the controlled dynamics or the dynamics of the stochastic factors. 
Within this example we use the identify  function $\ell: \bR^2 \to \bR^2$, $\ell(S) = S$ as the multi-loss function, and we assume the component-wise order $\leq$.

\begin{table}
	\centering
	\begin{tabular}{|c |  c c c c  |}
\hline
		& $\omega^1$ & $\omega^2$ & $\omega^3$ & $\omega^4$ \\ 
	\hline
		$S_T^\varphi(\omega^i)$ &  $\begin{pmatrix} 8\\ 0\end{pmatrix}$ & $\begin{pmatrix}0 \\8 \end{pmatrix}$ &  $\begin{pmatrix} 0\\0 \end{pmatrix}$  & $\begin{pmatrix} 8\\ 8\end{pmatrix}$   \\ 
		$S_T^\psi(\omega^i)$ & $\begin{pmatrix} 0\\ 8 \end{pmatrix}$ & $\begin{pmatrix} 0\\0 \end{pmatrix}$ &$\begin{pmatrix} 6\\ 0\end{pmatrix}$ & $\begin{pmatrix} 6\\ 8\end{pmatrix}$ \\
		\hline
	\end{tabular}
	\caption{Terminal values corresponding to two strategies.}
	\label{tab:2}
\end{table}

By direct computation we find $\bE^\theta_t[\ell(S_T^\varphi)]$ as well as $\bE^\theta_t[\ell(S_T^\psi)]$ for all $\theta\in\Theta$ and $t = 0,1$; see Tables~\ref{tab:3} and~\ref{tab:4}. Recalling the component-wise structure of the supremum operator~\eqref{eq:vsup-component} and applying further direct computation, we can verify that for $\bar{\Theta}\in\set{\Theta, \Theta^0}$ it holds
\begin{align*}
	\vsup\limits_{\theta \in \bar{\Theta}}  \bE_1^\theta [\ell(S^{\varphi}_T)] = 
	\begin{cases} 
		\begin{pmatrix}  6\\ 4\end{pmatrix}  & \omega_1 = u \\ 
			\begin{pmatrix} 4\\ 4\end{pmatrix} & \omega_1 = d \end{cases}  
			\quad \quad \text{ and } \quad \quad
\vsup\limits_{\theta \in \bar{\Theta}}  \bE_1^\theta [\ell(S^{\psi}_T)] 
	= \begin{cases} 
		\begin{pmatrix} 0\\ 6\end{pmatrix}  & \omega_1 = u \\ \begin{pmatrix} 6\\ 4\end{pmatrix} & \omega_1 = d 
		\end{cases}
\end{align*}
as well as 
\begin{align*}
	\vsup\limits_{\theta \in \bar{\Theta}} \bE_0^{\theta}  \left[ \vsup\limits_{\theta \in \bar{\Theta}}  \bE_1^\theta [\ell(S^{\varphi}_T)] \right] 
		= \begin{pmatrix} 5 \\ 4 \end{pmatrix}
		\quad \quad \text{ and } \quad \quad
	\vsup\limits_{\theta \in \bar{\Theta}} \bE_0^{\theta}  \left[ \vsup\limits_{\theta \in \bar{\Theta}}  \bE_1^\theta [\ell(S^{\psi}_T)] \right] 
	= \begin{pmatrix} 4.5 \\ 5 \end{pmatrix}.
\end{align*}
This shows that for both families of models $\bar{\Theta}\in\set{\Theta, \Theta^0}$, the recursive value function is
\begin{align*}
	{\cB}_0^{\bar{\Theta}}(S_0) = \left\lbrace \vsup\limits_{\theta \in \bar{\Theta}}  \bE_0^\theta \left[ \vsup\limits_{\theta \in \bar{\Theta}}  \bE_1^\theta [\ell(S^{\varphi}_T)] \right],  \vsup\limits_{\theta \in \bar{\Theta}}  \bE_0^\theta \left[ \vsup\limits_{\theta \in \bar{\Theta}}  \bE_1^\theta [\ell(S^{\psi}_T)] \right]  \right\rbrace  
	= \left\lbrace \begin{pmatrix} 5 \\ 4  \end{pmatrix},  \begin{pmatrix} 4.5 \\ 5  \end{pmatrix}  \right\rbrace .
\end{align*}
However, we do not obtain the same value functions $\cV$. For the full set of models $\Theta$ we get 
\begin{align*}
	\cV_0^{\Theta} (S_0) = \left\lbrace \vsup\limits_{\theta \in \Theta}  \bE_0^\theta [\ell(S^{\varphi}_T)],  \ \vsup\limits_{\theta \in \Theta}  \bE_0^\theta [\ell(S^{\psi}_T)]  \right\rbrace  
	= \left\lbrace \begin{pmatrix} 5 \\ 4  \end{pmatrix},  \begin{pmatrix} 4.5 \\ 5  \end{pmatrix}  \right\rbrace, 
\end{align*}
while for family $\Theta^0$ we obtain
\begin{align*}
\cV_0^{\Theta^0} (S_0) = \left\lbrace \vsup\limits_{\theta \in \Theta^0}  \bE_0^\theta [\ell(S^{\varphi}_T)],  \ \vsup\limits_{\theta \in \Theta^0}  \bE_0^\theta [\ell(S^{\psi}_T)]  \right\rbrace   
 = \left\lbrace \begin{pmatrix} 4 \\ 4  \end{pmatrix},  \begin{pmatrix} 4.5 \\ 4  \end{pmatrix}  \right\rbrace.
\end{align*}

This illustrates the difference between (non-rectangular) family of models $\Theta^0$ and $\leq$-rectangular family of models $\Theta$. Only the weak set-valued Bellman's principle  is satisfied for $\Theta^0$, it holds $\cB_0^{\Theta^0} \subsetneq \cV_0^{\Theta^0} + \bR^2_+$ and $\cV_0^{\Theta^0} \subsetneq \cB_0^{\Theta^0} - \bR^2_+$. For the $\leq$-rectangular family $\Theta$, the strong set-valued Bellman's principle holds as $\cV_0^{\Theta} = \cB_0^{\Theta}$.

\begin{table}
\centering 
	\begin{tabular}{|c |  c c c c   c c c c  c|}
		\hline
		$\theta$ 								
		& $\theta_1$ 	& $\theta_2$ 	& $\theta_3$ 	& $\theta_4$ 	& $\theta_5$ 	& $\theta_6$ 	& $\theta_7$ 	& $\theta_8$ & \\ \hline
		\multirow{2}{*}{$\bE_1^\theta [\ell(S^{\varphi}_2)]$}
		& $\begin{pmatrix} 4\\ 4\end{pmatrix} $  & $\begin{pmatrix} 6\\ 2\end{pmatrix}$  & $\begin{pmatrix} 6\\ 2\end{pmatrix}$  & $\begin{pmatrix} 4\\ 4\end{pmatrix}$  & \multirow{2}{*}{$\theta_1$}  & \multirow{2}{*}{$\theta_4$}  & \multirow{2}{*}{$\theta_3$}  & \multirow{2}{*}{$\theta_2$} & $\omega_1 = u$ \\
		& $\begin{pmatrix} 4\\ 4\end{pmatrix}$  & $\begin{pmatrix} 2\\ 2\end{pmatrix}$  & $\begin{pmatrix} 4\\ 4\end{pmatrix}$   & $\begin{pmatrix} 2\\ 2\end{pmatrix}$  & & & & & $\omega_1 = d$  \\ \hline
		$\bE_0^\theta [\ell(S^{\varphi}_2)]$
		&  $\begin{pmatrix} 4\\ 4\end{pmatrix}$  &  $\begin{pmatrix} 4\\ 2\end{pmatrix}$  &  $\begin{pmatrix} 5\\ 3\end{pmatrix}$  &  $\begin{pmatrix} 3\\ 3\end{pmatrix}$  &  $\begin{pmatrix} 4\\ 4\end{pmatrix}$  &  $\begin{pmatrix} 2.5\\ 2.5\end{pmatrix}$  &  $\begin{pmatrix} 4.5\\ 3.5\end{pmatrix}$  &  $\begin{pmatrix} 3\\ 2\end{pmatrix}$ &  \\ \hline 
	\end{tabular}
	\caption{Conditional expectations for different models corresponding to strategy $\varphi$.}
	\label{tab:3}
\end{table}

\begin{table}
	\centering
	\begin{tabular}{|c |  c c c c   c c c c  c|}
		\hline
		$\theta$ 								
		& $\theta_1$ 	& $\theta_2$ 	& $\theta_3$ 	& $\theta_4$ 	& $\theta_5$ 	& $\theta_6$ 	& $\theta_7$ 	& $\theta_8$ & \\ \hline
		\multirow{2}{*}{$\bE_1^\theta [\ell(S^{\psi}_T)]$}
		& $\begin{pmatrix} 0\\ 4\end{pmatrix} $  & $\begin{pmatrix} 0\\ 6\end{pmatrix}$  & $\begin{pmatrix} 0\\ 6\end{pmatrix}$  & $\begin{pmatrix} 0\\ 4\end{pmatrix}$  & \multirow{2}{*}{$\theta_1$}  & \multirow{2}{*}{$\theta_4$}  & \multirow{2}{*}{$\theta_3$}  & \multirow{2}{*}{$\theta_2$} & $\omega_1 = u$ \\
		& $\begin{pmatrix} 6\\ 4\end{pmatrix}$  & $\begin{pmatrix} 6\\ 2\end{pmatrix}$  & $\begin{pmatrix} 6\\ 4\end{pmatrix}$   & $\begin{pmatrix} 6\\ 2\end{pmatrix}$  & & & & & $\omega_1 = d$  \\ \hline
		$\bE_0^\theta [\ell(S^{\psi}_T)]$
		&  $\begin{pmatrix} 4.5\\ 4\end{pmatrix}$  &  $\begin{pmatrix} 3\\ 4\end{pmatrix}$  &  $\begin{pmatrix} 3\\ 5\end{pmatrix}$  &  $\begin{pmatrix} 3\\ 3\end{pmatrix}$  &  $\begin{pmatrix} 3\\ 4\end{pmatrix}$  &  $\begin{pmatrix} 4.5\\ 2.5\end{pmatrix}$  &  $\begin{pmatrix} 4.5\\ 4.5\end{pmatrix}$  &  $\begin{pmatrix} 4.5\\ 3\end{pmatrix}$ &  \\ \hline 
	\end{tabular}
\caption{Conditional expectations  for different models corresponding to strategy $\psi$. }
\label{tab:4}
\end{table}

\end{example}

\begin{example} We present an application  to portfolio optimization problem.  Consider an investor that holds a collection of portfolios, with wealth process represented by a vector $S_t\varphi=(S_t^{1,\varphi},\ldots,S_t^{d,\varphi})\in\bR^d$, where $S_t^{j,\varphi}$ is the wealth of the $j$-th portfolio corresponding to a self-financing trading strategy $\varphi$ that additionally may satisfy various constraints, such as short-selling constraints, turn-over constraints, etc. For example, each component could be a portfolio in a different currency, or in a different market with fundamentally different risk characteristics. See, for instance, \cite[Section~5]{FeinsteinRudloff2013a} for more details and motivations for studying a similar setup, instead of the traditional scalar formulations of the problem.   
The investor may have different preferences for each market, and hence use different utility functions $\ell_j$ for each component $S^j$, and overall optimizing the  multi-objective 
$ \left[\bE^\theta[\ell_1(S_T^\varphi)], \ldots, \bE^\theta[\ell_d(S_T^\varphi)]  	\right]^\mT$. 
Our motivation to study this formulation of portfolio optimization stems from the emerging fintech theme of digital financial advice for investment management and trading, known as robo-advising; see the survey \cite{DAcuntoRossi2021}. Traditionally, the robo-advising problem is formulated as one period mean-variance Markowitz portfolio optimization or  its variations such as Black-Litterman model \cite{KoEtAl2023}. The investor's risk profile (risk-aversion or risk tolerance coefficient) is elicited through questionnaires and assumed to be fixed and known between interaction times with the robo-advisor. Arguably, for unsophisticated and small investors, who constitute the vast majority of robo-advising platform users, finding the risk tolerance coefficient is a notoriously difficult problem, especially when combining several markets or classes of assets \cite{AlsabahCapponi2020}. A multi-valued formulation would allow: (a) eliciting the risk tolerance for major portfolio components (equity market, emerging markets, real estate market) and viewing the optimal portfolio as a point on the efficient frontier, potentially chosen by the investor through another questionnaire, and (b) dealing with inherently time-inconsistent problems. Model uncertainty would permit the use of tractable models while assuming uncertainty about the parametric characteristics of the driving stochastic factors. Detailed numerical  implementation of these ideas using market data is outside the scope of this manuscript.

\end{example}

\section*{Acknowledgment}
The authors acknowledge support from the Society for Industrial and Applied Mathematics (SIAM) as part of the SIAM Postdoctoral Support Program, which is funded by contributions to the SIAM Postdoctoral Support Fund, established by a gift from Drs. Martin Golubitsky and Barbara Keyfitz. 
Part of this research was performed while the authors were visiting the Institute for Mathematical and
Statistical Innovation (IMSI), which is supported by the US National Science Foundation. IC acknowledge partial support from the US National Science Foundation  Grant DMS-2407549.

\bibliographystyle{alpha}

\newcommand{\etalchar}[1]{$^{#1}$}

\end{document}